\newcommand\org@hypertarget{}
\let\org@hypertarget\hypertarget
\renewcommand\hypertarget[2]{%
  \Hy@raisedlink{\org@hypertarget{#1}{}}#2%
}
\theoremstyle{plain}
\newtheorem{theorem}{Theorem}[section]
\newtheorem{lemma}[theorem]{Lemma}
\newtheorem{corollary}[theorem]{Corollary}
\newtheorem{proposition}[theorem]{Proposition}
\theoremstyle{definition}
\newtheorem{example}[theorem]{Example}
\newtheorem{question}[theorem]{Question}
\newtheorem{definition}[theorem]{Definition}
\newtheorem{remark}[theorem]{Remark}
\newtheorem{prob}[theorem]{Problem}
\newcommand{\C} {\Bbb C}
\newcommand{\N}{\Bbb N}
\newcommand{\R}{\Bbb R}
\newcommand{\T}{\Bbb T}
\newcommand{\Z}{\Bbb Z}
\newcommand{\bc} {\Bbb C}
\newcommand{\bn}{\Bbb N}
\newcommand{\Balg} {\mathcal{B}}
\newcommand{\Alg} {\mathcal{A}}
\newcommand{\Tt} {\mathcal T}
\newcommand{\Kk} {\mathcal K}
\newcommand{\Aut}{\textup{Aut}}
\newcommand{\ip}[2]{\langle #1,#2 \rangle}
\newcommand{\Hil}{\mathsf{H}}
\newcommand{\semidir}{{\Bbb o}}
\newcommand{\smooth}{\mathcal C^1(A)}
\newcommand{\eps}{\varepsilon}
\newcommand{\ot}{\otimes}
\newcommand{\wt}{\widetilde}
\numberwithin{equation}{section}
\title{On Spectral Triples on Crossed Products arising from Equicontinuous Actions}
\author{Andrew Hawkins}
\address{School of Mathematical Sciences,  University of Nottingham, Nottingham, NG7 2RD, England}
\email{pmxah@nottingham.ac.uk  }
\author{Adam Skalski}
\address{Institute of Mathematics of the Polish Academy of Sciences,
ul.\ \'Sniadeckich 8, 00-956 Warszawa, Poland} \email{a.skalski@impan.pl}
\author{Stuart White}
\address{ School of Mathematics and Statistics, University of
Glasgow, University Gardens, Glasgow, G12 8QW, Scotland}
\email{stuart.white@glasgow.ac.uk}
\author{Joachim Zacharias}
\address{School of Mathematical Sciences,  University of Nottingham, Nottingham, NG7 2RD, England}
\email{joachim.zacharias@nottingham.ac.uk  }
\keywords{Spectral triple, crossed product, $C^*$-algebra, Lip-norm, equicontinuous action}
\subjclass[2000]{Primary: 46L05; Secondary: 46L87, 58B34}
\begin{document}

\maketitle

\begin{abstract}
\noindent
The external Kasparov product is used to construct odd and even spectral triples on crossed products of $C^*$-algebras
by actions of discrete groups which are equicontinuous in a natural sense. When the group in question is $\Z$ this gives another viewpoint on the spectral triples introduced by Belissard, Marcolli and Reihani. We investigate the properties of this construction and apply it to produce spectral triples on the Bunce-Deddens algebra arising from the odometer action on the Cantor set and some other crossed products of AF-algebras.
\end{abstract}

\section{Introduction}

One of the most fundamental concepts of noncommutative geometry is the notion of spectral triple or unbounded Fredholm module on a  $C^*$-algebra (\cite{Co2}). The key example is given by the Dirac operator on a compact spin manifold $M$, which defines a
spectral triple on the commutative $C^*$-algebra of continuous functions on $M$. The geometry of the manifold is encoded in this `Dirac'-triple. First of all, the triple defines a $K$-homology class, the fundamental class of $M$. Next it defines a metric on the state space of the algebra from which we can recover the metric on $M$: point evaluations are pure states and the distance between
two point evaluations coincides with the geodesic distance of the two points. Further, the dimension of $M$ can be read
off from summability properties of the triple, using the classical Weyl asymptotics for the eigenvalues of the
Laplace operator. All of this can be generalized to noncommutative algebras and a good part of the extensive research in noncommutative geometry
has been devoted to developing the corresponding formalism. Via Kasparov's $KK$-theory, a
spectral triple on a general $C^*$-algebra defines a $K$-homology
class on $A$ (provided mild regularity conditions are met).
Correspondingly, there are two types of spectral triples, even and odd ones (this distinction is more important from the
point of view of $K$-homology, since every even triple gives an odd one by forgetting the grading). A spectral triple defines a
pseudometric on the state space of the algebra. Rieffel (\cite{Rieff})  identified  conditions under which this pseudometric is a
metric inducing the weak-$^*$ topology: the function $ a \mapsto\| [D,a]\|$ has to be a Lip-norm on $A$. If this is true, we will say that the triple satisfies
the \emph{Lip} or \emph{Lipschitz condition} and the corresponding metric will be said to be a \emph{Lip} or \emph{Lipschitz} metric.

Although the definition of spectral triples is at least 20 years old and its predecessors have been around for more than 30 years, so far most of the research has focused on properties of a particular given spectral triple and its associated differential calculus, with the constructions mainly produced on a case by case basis.
Little seems to be known about the existence of spectral triples on general $C^*$-algebras.
Besides the fundamental example of the irrational rotation algebra, `good' spectral triples have
been constructed  for some specific classes of $C^*$-algebras, e.g.\  for group $C^*$-algebras of discrete hyperbolic groups (\cite{OR}), for AF-algebras (\cite{CI}) and for algebras arising as $q$-deformations of the function algebras of simply connected simple compact Lie groups (\cite{NeshTus}).  There are very few general results in literature treating other
important classes, e.g.\ A$\T$-algebras (in \cite{Expo} it was shown that spectral triples always exist on quasidiagonal $C^*$-algebras, but very little can in general be said about their regularity).

In \cite{cmrv,bmr} spectral triples are constructed on $C^*$-algebras arising as crossed products by a $\Z$-action, starting from a triple on the coefficient algebra, provided a certain uniform boundedness condition is satisfied.  In this paper we further examine this construction, showing how to extend it to actions of an arbitrary finitely generated discrete group $\Gamma$. We also develop the investigation of the uniform boundedness condition, showing that it is closely related to metric equicontinuity of the induced transformation on the state space of the algebra.  In the case $\Gamma=\Z$, \cite{bmr} shows that this construction behaves well: if the original triple on the coefficient algebra induces a Lip-metric, then so does the crossed product triple. In section 2, we show how this result can be obtained using a cut-down procedure of Ozawa and Rieffel, and explain the difficulties involved in extending this beyond $\Z$. We also observe that the construction can be iterated in a natural way and use it to prove the existence of Lip-metric inducing spectral triples on crossed products by equicontinuous actions of $\Z^d$.

From our point of view the idea behind this  construction is to use the external unbounded Kasparov product which is designed to produce a spectral triple on a tensor product and check under what conditions the same formula (c.f.\:\cite{Co2}, p.\,434) still defines a triple on a crossed product.  The other ingredient is given by a translation bounded function on the group. If $\Gamma = \Z$ and the translation bounded function is the identity on $\Z$, then the construction gives precisely the spectral triple investigated in  \cite{cmrv} and \cite{bmr}. In this case the even $K$-homology class induced by the triple may be viewed as the image of the odd $K$-homology class defined by the spectral triple on the coefficient
algebra under the boundary map in the Pimsner-Voiculescu sequence (\cite{pv}). Similarly starting from an even spectral triple on the coefficient algebra we give a formula at the end of Section 2 producing an odd triple on the crossed product, which for $\Gamma = \Z$ and
the identity translation bounded function represents the image of the corresponding even $K$-homology class under the other PV-boundary map.

In Section 3 we apply the construction to certain examples of metrically equicontinuous (in fact often even isometric) actions, mostly on AF-algebras.
First we observe that for actions on commutative $C^*$-algebras the construction of spectral triples works only if the action  is equicontinuous on the spectrum of the algebra. The classification of minimal equicontinuous homeomorphisms on the Cantor set
(\cite{Kur} Thm.4.4) shows that the homeomorphisms in this class coincide with the ones which lead to Bunce-Deddens algebras, realised as crossed
products of minimal (odometer) actions on the Cantor set. Using the triples of Christensen and Ivan as the starting ingredient on the coefficient algebra we thus obtain spectral triples on Bunce-Deddens algebras and it follows that these are unique $C^*$-algebras
associated to minimal Cantor systems which fit into the framework studied in the paper. Next  we discuss some basic facts concerning the metrically equicontinuous $\mathbb{Z}$-actions on noncommutative AF-algebras, showing that product type automorphisms belong to this class. Finally we provide a proof of a folklore statement that a minimal action of a countable discrete group $\Gamma$ on the Cantor set is equicontinuous if and only if it is conjugate to a $\Gamma$-subodometer.
Thus on one hand the uniform boundedness condition which we require to construct a spectral triple on the crossed product places strong restrictions on the
possible actions, and on the other we can use the construction to provide spectral triples on generalized Bunce-Deddens algebras investigated in \cite{Orf}. Note that most of the crossed product $C^*$-algebras analysed in Section 3 fall in the A$\mathbb{T}$-class.

All Hilbert spaces and $C^*$-algebras in this paper are assumed to be separable in norm, and algebras are
assumed to be unital unless specified otherwise.

\section{Spectral triples on crossed products}\label{sec2}

We begin this section by recalling the basic definitions related to spectral triples and their properties, and reminding the reader how the Kasparov type product
can be used to produce triples on the tensor product of $C^*$-algebras. Then we analyse a condition on the action of a discrete group $\Gamma$ on a $C^*$-algebra  (and identify it with metric equicontinuity of the action induced on the state space of the algebra), which allows us to use a similar idea to construct
(even) spectral triples on the corresponding crossed products. Next we show that for $\Gamma=\Z$ the last construction preserves the Lipschitz property. Finally we explain how an analogous method can be used to obtain an odd triple on the crossed product, starting from an even triple on the
coefficient algebra. This enables us to
iterate the construction and obtain triples inducing Lipschitz metrics
from equicontinuous actions of $\Z^d$.

\subsection{Spectral triples and compact quantum metric spaces}

\begin{definition} \label{spectrip}
Let $A$ be a separable unital $C^*$-algebra. An \emph{odd spectral triple} $(\Alg, \Hil,D)$ on $A$ consists of a representation
$\pi$ of $A$ on a Hilbert space $\Hil$, a dense $^*$-subalgebra $\Alg$ of $A$ and a densely defined self-adjoint operator $D$
on $\Hil$ such that:
\begin{enumerate}
\item
$(1+D^2)^{-1/2}$ is compact, i.e. $D$ is diagonalizable, has finite dimensional eigenspaces and, if $\Hil$ is infinite dimensional, then the absolute values of the eigenvalues of $D$ increase to infinity.
\item For all $a\in\Alg$,
$\pi(a)$ maps the domain of $D$ into itself and $[D,\pi(a)]$ extends to a bounded operator on $\Hil$.
\end{enumerate}
Such an operator $D$ is referred to as a \emph{Dirac operator}. In many cases it is natural to assume that $\Alg$ is in fact equal to the  \emph{Lipschitz algebra} $\smooth$, i.e.\ the collection of all those $a\in A$ for which the domain of $D$ is invariant under the multiplication by $\pi(a)$ and the operator $[D,\pi(a)]$ is bounded. The operator $a\mapsto [D,\pi(a)]$ can be viewed as a densely defined closable derivation from $A$ to $\mathbb B(\Hil)$, with domain of its closure being $\smooth$. Equipped with the natural norm $\|a\|_1:=\|a\| + \|[D,\pi(a)]\|$ the $*$-algebra $\smooth$ is a Banach algebra (see for example Lemma 1 in \cite{bmr}). Occasionally, when we want to stress the choice of the representation, we denote the triple by $(\Alg,\pi,D)$.

  An \emph{even spectral triple} on $A$ is given
by the same data and  the additional structure of a $\Z_2$-grading, i.e.\ a Hilbert space decomposition
$\Hil=\Hil_0 \oplus \Hil_1$ with respect to which $\pi$ and $D$ decompose as
\begin{equation}
\pi=\pi_0 \oplus \pi_1=
\begin{bmatrix}
\pi_0 & 0 \\
0 & \pi_1
\end{bmatrix},\qquad D=
\begin{bmatrix}
0 & D_0 \\
D_0^* & 0
\end{bmatrix}.
\end{equation}
For $p>0$, the triple is said to be \emph{$p$-summable}  if $(1+D^2)^{-p/2}$ is a trace class operator.
\end{definition}

Rieffel also considered the more general setting of \emph{Lipschitz seminorms} (\cite{R.MetricStateActions}, \cite{R.Metrics},
\cite{R.Vec.Grom.Haus}). There are various versions of this concept. For us it is a seminorm $L: \Alg \to \R_+$,
where  $\Alg$ is a dense subspace of $A$ containing $1$ such that $L(1)=0$.
Such a seminorm $L$ is said to be \emph{lower semicontinuous} (respectively, \emph{closed}) if $\{ a \in \Alg : L(a) \leq r\}$ is closed in $\Alg$ (respectively, $A$) for some and hence all $r>0$. If $L$ is lower semicontinuous then, using a corresponding Minkowski functional, $L$ can be extended to a closed Lipschitz seminorm $\bar{L}$ (cf.\ \cite[Proposition 4.4]{R.Metrics}). One can define a norm on $\Alg$ by $\|a\|_1:=\|a\| + L(a)$ and completing $\Alg$ in this norm gives a subspace $\mathcal C^1(A,L)$ of $A$, which is identical with the domain of $\bar{L}$ when $L$ is lower semicontinuous.

A Lipschitz seminorm induces a pseudometric on the state space, $S(A)$, of $A$ by
\begin{equation}\label{DefD}
d(\omega_1,\omega_2)=\sup \{ |\omega_1(a) - \omega_2(a)| : a \in \Alg, L(a)\leq 1 \}
\end{equation}
A spectral triple on $A$ induces a Lipschitz seminorm on $\Alg$ by the formula
\[ L_D(a)=\|[D,\pi(a)]\|, \;\;\; a \in \Alg .\]
This seminorm is always lower semicontinuous (\cite[Proposition 3.7]{R.Metrics}) and if $L_D(a)$ is defined for all $a \in \smooth$ it is then closed. Moreover, $d_L=d_{\bar{L}}$ (again by \cite[Proposition 4.4]{R.Metrics}), but the metric $d$ might still depend on the choice of $\Alg$ and we will denote it by $d_{\Alg}$  when stressing this dependence.

 As noted in \cite[Section 2 (just prior to Definition 2.1)]{R.Memoir} the pseudometric is unchanged by additionally demanding
that $a=a^*$ in the supremum above.  Since its introduction by Connes in \cite{Co1} this pseudometric has been
extensively studied by Rieffel and other authors (\cite{OR}, \cite{R.MetricStateActions}, \cite{R.Metrics},
\cite{Rieffgroup}, \cite{Rieff}, \cite{R.Memoir}, \cite{Co2}) and an array of examples has been produced. Many properties
of the metric space $(S(A),d)$ can be read out directly from the properties of the triple. In particular Rieffel has focused
investigations on those triples for which the topology induced by the pseudometric is the weak-$^*$ topology on
the state space on $A$. This condition is naturally satisfied by the so-called Monge-Kantorovich distance on the
space of probability measures on a compact space $X$ (viewed as the state space of $C(X)$).  Rieffel
characterised when this property holds in the following fashion. (See \cite{R.Metrics}, where this is in fact done in the more general context of metrics arising from Lipschitz seminorms.)
\begin{enumerate}
\item The pseudometric $d$ separates points of the state space if and only if the triple is
\emph{non-degenerate}, i.e.\ the representation $\pi$ is faithful and, for $a\in\Alg$, $[D,\pi(a)]=0$ if and only
if $a\in\mathbb C1$.
\item The pseudometric $d$ is a metric on the state space of $A$ with finite diameter if and
only if the triple is non-degenerate and the image of  $\{a\in\Alg:\|[D,\pi(a)]\|\leq 1\}$ is bounded in the
quotient space $A/\mathbb C1$.
\item The pseudometric is a metric inducing the weak-$^*$ topology on the state
space of $A$ if and only if the triple is non-degenerate and the image of $\{a\in\Alg:\|[D,\pi(a)]\|\leq 1\}$ is
totally bounded in the quotient space $A/\mathbb C1$.  In this case $d$ is known as a \emph{Lip-metric} and the triple gives
us a \emph{spectral quantum metric space} structure on $A$. We also say then that the triple $(\Alg,\Hil,D)$ satisfies the \emph{Lipschitz condition}. Recall that if $L$ is a Lipschitz seminorm, not necessarily coming from a Dirac operator, such that the corresponding metric on the state space is compatible with the weak-$^*$ topology, then the pair $(A,L)$ is called a   \emph{(compact) quantum metric space}.
\end{enumerate}

A priori all these concepts could depend on the choice of the $C^*$-norm dense algebra $\Alg$.
We are grateful to Adam Rennie for pointing out that the first of them does not.  Let $(P_n)_n$ be the spectral projections of the Dirac operator.
Non-degeneracy is equivalent to $\Alg\cap\{P_n\}'=\mathbb C1$ and, by the double commutant theorem, it is also
equivalent to $(A'\cup\{P_n\})''=\mathbb B(\Hil)$; the last equality is explicitly independent of the choice of the dense subalgebra
$\Alg$.

The situation with the Lipschitz condition seems to be more complicated. If the triple is nondegenerate and $d_{\smooth}$ is a Lip-metric, then so is $d_{\Alg}$ for any other choice of a dense subalgebra $\Alg$ contained in $\smooth$. Using the results in Section 4 of \cite{R.Metrics} one can show that given two such subalgebras $\Alg_1$ and $\Alg_2$ the metrics $d_{\Alg_1}$ and $d_{\Alg_2}$ are equal if and only if the sets $\{a\in \Alg_1: \|[D,\pi(a)]\|\leq 1\}$  and $\{a\in \Alg_2: \|[D,\pi(a)]\|\leq 1\}$ have equal ($C^*$-norm) closures in $A$. The latter condition is trivially satisfied if the closures of $\Alg_1$ and $\Alg_2$ in the Lipschitz norm $\|\cdot\|_1$ of $\smooth$ are equal. We do not know however the answers to the following questions, the second of which can be viewed as  a natural problem related to norming properties of subsets of a classical Banach space.

\begin{prob}
Does there exist a nondegenerate spectral triple $(\Alg,\Hil,D)$ on a $C^*$-algebra $A$ such that $d_{\Alg}$ is a Lip-metric and $d_{\smooth}$ is not?
\end{prob}

\begin{prob}
Does there exist a uniformly dense subalgebra $\Alg$ of $C(\mathbb{T})$ consisting of Lipschitz functions and such that the metric on the probability measures on $\mathbb{T}$ given by the formula:
\[ d_{\Alg}(\mu, \nu) = \sup\{|\mu(f) - \nu(f)|: f \in \Alg,\ L(f)\leq 1\}\]
is strictly smaller then the usual bounded Lipschitz distance (known by the theorem of Kantorovich and Rubinstein to coincide with the celebrated Wasserstein distance $W_1$, see \cite{Villani})
\[ d(\mu, \nu) = \sup\{|\mu(f) - \nu (f)| : f \in C(\mathbb{T}),\ L(f)\leq 1\}?\]
Note that the metric $d_{\Alg}$ necessarily equips the space of the probability measures on $\mathbb{T}$ with the weak-$^*$ topology.
\end{prob}

In \cite{Co1}, Connes provided examples of spectral triples on the reduced group $C^*$-algebras of discrete groups arising from proper translation bounded functions.  These triples provide an important ingredient in the construction of spectral triples on reduced crossed products.
\begin{example}\label{GroupTriple}
Let $\Gamma$ be a discrete group equipped with a \emph{proper translation bounded function} $l:\Gamma\rightarrow\Z$, i.e.
\begin{enumerate}
\item $l(g)=0$ if and only if $g$ is the identity element of $\Gamma$;
\item The level sets $\{g\in\Gamma:l(\gamma)=n\}$ are finite for each $n\in\Z$;
\item For all $g\in\Gamma$, the translation function $l_g:\Gamma\rightarrow\Z$ given by $l_g(x)=l(x)-l(g^{-1}x)$ is bounded.
\end{enumerate}
Let $M_l$ be the usual self-adjoint extension of the unbounded operator of multiplication by $l$ on
$\ell^2(\Gamma)$ with domain consisting of the finitely supported elements of $\ell^2(\Gamma)$. In this way we
obtain a non-degenerate odd spectral triple $(\mathbb C\Gamma,\ell^2(\Gamma),M_l)$ on the reduced group
$C^*$-algebra $C^*_r(\Gamma)$.  When $\Gamma$ is finitely generated, the word length function with respect to a
fixed set of generators gives an example of a proper translation bounded function. Rieffel showed that this
triple turns $C^*_r(\Gamma)$ into a compact quantum metric space when $\Gamma=\Z^d$ is equipped with any length
function arising from a finite generating set (\cite{Rieffgroup}) and Ozawa and Rieffel established the same result when $\Gamma$ is a hyperbolic group
(\cite{OR}). Another important example of a proper translation bounded function is the identity function $\iota$
on $\Z$. In this case the triple we obtain on $C^*_r(\Z)\cong C(\mathbb T)$ agrees with the usual triple on
$C(\mathbb T)$ arising from the Dirac operator $\frac{1}{i}\frac{\mathrm{d}}{\mathrm{d}\theta}$ and, at the level of
$K$-homology, represents a generator of $K^1(C^*_r(\Z))\cong \Z$.
\end{example}

Odd and even spectral triples give rise to odd and even $K$-homology classes respectively. Moreover, given two
odd triples $(\Alg,\Hil_A,D_A)$ and $(\Balg,\Hil_B,D_B)$ on $C^*$-algebras $A$ and $B$ there is a formula for
producing an even triple on the spatial tensor product $A\otimes B$. Let $\Hil=\Hil_A\otimes\Hil_B$, and define a
Dirac operator $D$ on $\Hil\oplus\Hil$ by
\begin{equation}\label{DefDirac}
D=
\begin{bmatrix}
0 & D_A \otimes 1 - i \otimes D_B \\
D_A \otimes 1 + i \otimes D_B & 0
\end{bmatrix}.
\end{equation}
Then $(\Alg\odot \Balg,\Hil\oplus\Hil,D)$, where $\Alg\odot \Balg$ denotes the
algebraic tensor product of $\Alg$ and $\Balg$, is an even spectral triple on $A\otimes B$ (c.f.\:\cite{Co2}, p.\,434). One can directly describe the eigenvalues and eigenspaces of $D$ in terms of those of $D_A$ and $D_B$ to see that $D$ is self-adjoint. This calculation also shows that if $D_A$ and $D_B$ are $p$ and $q$-summable respectively, then $D$ is $(p+q)$-summable as if $((1+\lambda_m^2)^{-p/2})_{m=1}^{\infty}$ and $((1+\mu_n^2)^{-q/2})_{n=1}^{\infty}$
are summable then the double sequence $((1+\lambda_m^2+\mu_n^2)^{-(p+q)/2})_{n,m=1}^{\infty}$ is summable. The last fact follows from the inequality
\begin{equation}
\frac{1}{\left(x+y -1\right)^{\alpha + \beta}} \leq \frac{1}{x^{\alpha}y^{\beta}}
\end{equation}
valid for all $x,y >1, \alpha, \beta>0$.

The triple $(\Alg \odot \Balg,\Hil\oplus\Hil,D)$ corresponds to a certain Kasparov product of the respective
$K$-homology classes. The main observation in this paper is that the operator $D$ can also be used to define a
triple on reduced crossed products $A\rtimes_{r,\alpha}\Gamma$ provided the action is metrically equicontinuous as defined below. This
gives us a different way of viewing the spectral triples on the crossed products $A\rtimes_\alpha\Z$ recently introduced in \cite{bmr}, as explained in the next subsection.

\subsection{Metrically equicontinuous actions and construction of spectral triples on crossed products}

Recall that a continuous  action of $\Gamma$ on a compact metric space $(X,d)$ is \emph{equicontinuous} if for all $\varepsilon >0$ there exists $\delta >0$ such that $d(x,y) < \delta $ implies
$d(gx,gy) < \eps $ for all $g \in \Gamma$. Note that this means that $\bar{d}(x,y):=\sup_g d(gx,gy)$ is an invariant
metric inducing the same topology, and that equicontinuity is characterised be the existence of such an invariant metric. However, the two metrics $d$ and $\bar{d}$ need not be equivalent, that is, there need not exist any $C > 0$ such that $\frac{1}{C}d(x,y) \leq \bar{d}(x,y) \leq Cd(x,y)$ for all $x,y \in X$. When there is an equivalent invariant metric, we will call the action \emph{metrically equicontinuous}; easy examples show that this condition is in general indeed stronger than equicontinuity.

Note that an action $\alpha : \Gamma \to \textup{Aut} (A)$ defines a continuous  action $\hat{\alpha}$ of $\Gamma $
on the compact metrisable space $S(A)$ via $  \hat{\alpha}_g(\phi)= \phi \circ \alpha_{g^{-1}}$.

\begin{definition} \label{equicont} Let $A$ be a unital $C^*$-algebra with a dense unital $^*$-subalgebra $\Alg$ and a
Lipschitz seminorm $L:\Alg\to \mathbb{R}_+$ such that $(A,L)$ is a compact quantum metric space. An action $\alpha : \Gamma \to \textup{Aut} (A)$ is called \emph{smooth} if for each $g\in \Gamma$ the automorphism $\alpha_g$ leaves $\Alg$ globally invariant. If  moreover
\begin{equation}\label{Hypothesismetric}
\sup_{g\in \Gamma}L(\alpha_g(a))<\infty,\quad a\in\Alg,
\end{equation}
then we will call the action $\alpha$ \emph{metrically equicontinuous}. Finally if a smooth action $\alpha$ is such that
$L(\alpha_g(a))=L(a)$ for all $a\in\Alg$ and $g\in\Gamma$, then we will call it \emph{isometric}.
\end{definition}

If $(A,L)$ is a compact quantum metric space and $\alpha : \Gamma \to \textup{Aut} (A)$ is metrically  equicontinuous, it will be convenient to introduce the Lipschitz seminorm $L_{\Gamma}:\Alg \to \mathbb{R}_+$,
\begin{equation}
L_{\Gamma}(a) := \sup_{g\in \Gamma}L(\alpha_g(a)),\quad a\in\Alg,
\end{equation}
which in turn determines a pseudometric $d_{L_{\Gamma}}$ on $S(A)$. An elementary check reveals that since $\{a \in \Alg: L(a) \leq 1\}$ separates the states of $A$, $\{a \in \Alg: L_{\Gamma}(a) \leq 1\}$ also separates the states of $A$ and that $d_{L_{\Gamma}}$ is a metric, with $d_{L_{\Gamma}} \leq d_L$ pointwise. The map $\iota: (S(A),d_L) \mapsto (S(A),d_{L_{\Gamma}})$ is a continuous map of metric spaces with $(S(A),d_L)$ compact, whence a homeomorphism. Since the induced action of $\Gamma$ on the metric space $(S(A),d_{L_{\Gamma}})$ is actually \textit{isometric}, it follows that $\hat{\alpha}$ implements an equicontinuous action of $S(A)$, when equipped with Connes' metric for the compact quantum metric $(A,L)$.

The next result shows that if $L:\Alg\to \mathbb{R}_+$ is closed, then metric equicontinuity of the action $\alpha:\Gamma\rightarrow\textup{Aut}(A)$ is equivalent to metric equicontinuity of the induced action on the state space and also to the existence of an equivalent Lipschitz seminorm for which the $\alpha$ acts by isometries. This justifies our terminology.  We will subsequently use our construction of a spectral triple on crossed products to show that if $\alpha$ is a metrically equicontinuous action on spectral quantum metric space $(A,D)$, then we can construct an equivalent spectral quantum metric $(A,D')$ for which $\alpha$ is isometric (Corollary \ref{spectralqms}).

\begin{proposition}\label{equicontthm}
Let $A$ be a unital $C^*$-algebra with a dense unital $^*$-subalgebra $\Alg$ and a lower semicontinuous Lipschitz seminorm $L:\Alg\to \mathbb{R}_+$ (in particular $\Alg = \smooth$) such that $(A,L)$ is a compact quantum metric space.
Assume that $\alpha:\Gamma\to \textup{Aut}(A)$ is a smooth action. Consider the following statements:
\begin{enumerate}[(i)]
\item $\alpha$ is metrically equicontinuous on $(A,L)$;
\item there exists a Lipschitz seminorm $L'$ on $\Alg$ for which $\alpha$ is isometric and a constant $C>0$ such that $L(a)\leq L'(a)\leq CL(a)$ for all $a\in\Alg$;
\item there exists an $\hat{\alpha}$-invariant metric $d$ on $S(A)$ which is equivalent to $d_L$.
\end{enumerate}
Then (ii)$\Longrightarrow$(iii)$\Longrightarrow$(i) in general, and in the case that $L$ is closed we additionally have (i)$\Longrightarrow$(ii).
\end{proposition}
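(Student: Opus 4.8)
The plan is to prove the three implications in turn; (ii)$\Rightarrow$(iii) and (iii)$\Rightarrow$(i) are soft arguments involving Connes' metric, while (i)$\Rightarrow$(ii) rests on a uniform boundedness argument, and this is where the closedness hypothesis enters.

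For (ii)$\Rightarrow$(iii), I would simply take $d:=d_{L'}$, the metric on $S(A)$ induced by $L'$ via the defining formula \eqref{DefD}. Since $\alpha_g(\Alg)=\Alg$ and $L'\circ\alpha_g=L'$, a direct change of variables shows $d_{L'}(\hat\alpha_g\omega_1,\hat\alpha_g\omega_2)=\sup\{|\omega_1(b)-\omega_2(b)|:b\in\Alg,\ L'(\alpha_g b)\leq 1\}=d_{L'}(\omega_1,\omega_2)$, so $d$ is $\hat\alpha$-invariant. The sandwich $L\leq L'\leq CL$ gives the inclusions $\{L'\leq1\}\subseteq\{L\leq1\}\subseteq\{L'\leq C\}$, whence $\tfrac1C d_L\leq d_{L'}\leq d_L$ pointwise on $S(A)$; in particular $d$ is a metric equivalent to $d_L$ (and therefore still induces the weak-$^*$ topology).

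For (iii)$\Rightarrow$(i), the key input is the duality between a closed Lipschitz seminorm and its metric (de Leeuw's theorem and its noncommutative version, see Section 4 of \cite{R.Metrics}): for the closure $\bar L$ of $L$ one has $\bar L(b)=\sup\{|\omega_1(b)-\omega_2(b)|/d_L(\omega_1,\omega_2):\omega_1\neq\omega_2\}$, and since $L$ is lower semicontinuous $\bar L$ restricts to $L$ on $\Alg$. Fix $a\in\Alg$ and $g\in\Gamma$. Using $\omega(\alpha_g a)=(\hat\alpha_{g^{-1}}\omega)(a)$ and re-parametrising the supremum by $\psi_i=\hat\alpha_{g^{-1}}\omega_i$ we get
\[ L(\alpha_g a)=\sup_{\psi_1\neq\psi_2}\frac{|\psi_1(a)-\psi_2(a)|}{d_L(\hat\alpha_g\psi_1,\hat\alpha_g\psi_2)}. \]
If $d$ is an $\hat\alpha$-invariant metric with $\tfrac1C d_L\leq d\leq C d_L$, then $d_L(\hat\alpha_g\psi_1,\hat\alpha_g\psi_2)\geq\tfrac1C d(\hat\alpha_g\psi_1,\hat\alpha_g\psi_2)=\tfrac1C d(\psi_1,\psi_2)\geq\tfrac1{C^2}d_L(\psi_1,\psi_2)$, and substituting back yields $L(\alpha_g a)\leq C^2 L(a)$ for every $g$, i.e.\ condition (i) with the explicit bound $L_\Gamma\leq C^2 L$.

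For (i)$\Rightarrow$(ii) under the hypothesis that $L$ is closed, I would take $L':=L_\Gamma$, the seminorm $a\mapsto\sup_{g}L(\alpha_g a)$, which is everywhere finite by (i), satisfies $L'(1)=0$ and $L\leq L'$, and is $\alpha$-isometric because $g\mapsto gh$ permutes $\Gamma$. The whole content is the reverse estimate $L'\leq CL$. Since $L$ is closed, $(\Alg,\|\cdot\|_1)$ with $\|a\|_1=\|a\|+L(a)$ is a Banach space, and a closed-graph argument (if $a_n\to a$ and $\alpha_g a_n\to b$ in $\|\cdot\|_1$, comparing $\|\cdot\|$-limits forces $b=\alpha_g a$) shows each $\alpha_g$ is a bounded operator on it. Hypothesis (i) says precisely that $\{\alpha_g:g\in\Gamma\}$ is pointwise bounded on this Banach space, so the uniform boundedness principle gives $M<\infty$ with $\|\alpha_g a\|_1\leq M\|a\|_1$ for all $g,a$, hence $L'(a)\leq M(\|a\|+L(a))$. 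To remove the $\|a\|$ term I would replace $a$ by $a+\lambda 1$ (which alters neither $L'(a)$ nor $L(a)$), infimise over $\lambda\in\C$, and use that $(A,L)$ being a compact quantum metric space makes $\{b\in\Alg:L(b)\leq1\}$ bounded, say by $R$, in $A/\C1$, giving $\inf_\lambda\|a+\lambda1\|\leq R\,L(a)$ and therefore $L'\leq M(R+1)L$. Setting $C:=M(R+1)$ completes (ii).

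I expect the last step to be the main obstacle: condition (i) only gives \emph{pointwise} finiteness of $\sup_g L(\alpha_g a)$, and turning this into the \emph{uniform} linear comparison $L'\leq CL$ demanded by (ii) is exactly where completeness of $(\Alg,\|\cdot\|_1)$ (for the uniform boundedness principle) and the quantum-metric-space property (to absorb the lower-order norm term) are indispensable — which is why (i)$\Rightarrow$(ii) needs the extra hypothesis that $L$ is closed, while (ii)$\Rightarrow$(iii)$\Rightarrow$(i) hold without it.
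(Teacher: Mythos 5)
Your proof is correct and follows essentially the same route as the paper: the same cycle of implications, with (iii)$\Rightarrow$(i) resting on Rieffel's recovery of $L$ from $d_L$ (Theorem 4.1 of \cite{R.Metrics}) and (i)$\Rightarrow$(ii) on a Baire-category comparison of $L_\Gamma$ with $L$ on the Banach space $(\Alg,\|\cdot\|_1)$, followed by absorption of the $\|a\|$ term through the finite diameter of $(S(A),d_L)$. The only cosmetic difference is that you invoke the closed graph theorem plus the uniform boundedness principle for the family $\{\alpha_g\}$, where the paper instead proves completeness of $\|\cdot\|_{1,\Gamma}$ and applies the open mapping theorem.
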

\begin{proof}
Suppose first that (i) holds so that $\alpha$ is metrically equicontinuous for the compact quantum metric
space $(A,L)$ and suppose that $L$ is closed. Then $\|a\|_{1,\Gamma} := \;\|a\| + L_{\Gamma}(a)$  is a new norm on $\Alg$ dominating $\|a\|_1= \| a \| + L(a)$. Since $L$ is closed, $(\Alg,\| \cdot \|_1)$ is a Banach space, and as in \cite[Corollary 1]{bmr}, $\Alg$ is also complete in the norm $\|\cdot\|_{1,\Gamma}$. For completeness, we provide a proof of this fact. If $(a_n)_{n=1}^{\infty}$ is a Cauchy sequence with respect to \  $\| \cdot \|_{1,\Gamma}$
then it is also Cauchy with respect to $\|\cdot\|_1$, so converges to some $a \in \Alg$. In particular $L(a_n-a) \to 0$ and
$\|a_n -a\| \to 0$ as $n \to \infty$. Let $\eps >0$ then there is $n_0 \in \N$ such that
$\|a_n - a_m\|_{1,\Gamma} < \eps/4$ (and so $\|a_n - a_m\| < \eps/4$ and $L(\alpha_g (a_n - a_m)) <\eps/4$ for all $g \in \Gamma$) whenever $n,m \geq n_0$. Since $\alpha_g(a_n) \to \alpha_g(a)$ and $L$ is lower semicontinuous it follows that
for $m\geq n_0$, $L(\alpha_g (a - a_m)) \leq \eps/4 $ for every $g \in \Gamma$. Hence
$\sup_{g \in \Gamma}L(\alpha_g(a - a_m)) \leq \eps/4  < \eps/2 $ and
$\|a - a_m\|_{1,\Gamma}  <\eps $
whenever $m \geq n_0$.  Thus $(a_n)_{n=1}^{\infty}$ converges to $a$ in $(\Alg , \| \cdot \|_{1,\Gamma})$.
By the open mapping theorem it follows that
$\| \cdot \|_1$ and $ \| \cdot \|_{1,\Gamma}$ must be equivalent norms on $\Alg$.
In particular there is a constant $K > 0$ such that
$$
L_{\Gamma}(a) \leq K(\|a\| + L(a)),\quad a \in \Alg.
$$
Since $L(a) = L_{\Gamma}(a) = 0$ whenever $a$ is a multiple of the identity of $A$, this inequality passes to the quotient algebra $A / \C I$, where it reads
$$
L_{\Gamma}(\tilde{a}) \leq K(\|\tilde{a}\|_{\Alg / \C I} + L(\tilde{a})),\quad \tilde{a} \in \Alg / \C I.
$$
By \cite[Lemma 2.1]{R.Metrics} we may identify $\{ \phi -\psi : \phi , \psi \in S(A) \}$ with $D_2=\{ \lambda \in (A/\C I)^* : \| \lambda \| \leq 2 , \lambda^* = \lambda \}$, where $\lambda^*(x)=\overline{\lambda(x^*)}$.   Moreover as in \cite{R.Metrics}, Proposition 2.2 we have for $\lambda =  \phi -\psi \in D_2$
$$
|\lambda (a) | =  |\phi (a)-\psi (a) | \leq L(a) d_L(\phi ,\psi)
$$
and since $\|\tilde{a}\|_{\Alg / \C I} \leq \sup \{ |\lambda (a) | : \lambda \in D_2 \}$ it follows that
$$
\| \tilde{a} \|_{\Alg / \C I} \leq L(a)  \textup{diam}_{d_L}(S(A))
$$
for all $a \in \Alg / \C I$. Since $(A,L)$ is a compact quantum metric space it must have finite diameter in the metric $d_L$. Thus for all $a \in \Alg$
\begin{eqnarray*}
L_{\Gamma}(a) &=&  L_{\Gamma}(\tilde{a}) \\
& \leq & K(\|\tilde{a}\|_{\Alg / \C I} + L(\tilde{a})) \\
& \leq & K\big(\textup{diam}_{d_L}(S(A)) + 1\big) L(\tilde{a}) \\
& = & K\big(\textup{diam}_{d_L}(S(A)) + 1\big) L(a).
\end{eqnarray*}
Setting $C = K(\textup{diam}_{d_L}(S(A)) + 1)$ we obtain the estimate
\begin{eqnarray}
L(a) \leq L_{\Gamma}(a) \leq C L(a), \;\;a \in \Alg.
\end{eqnarray}

For the implication (ii)$\Longrightarrow$(iii), given a Lipschitz seminorm $L'$ on $\Alg$ and $C>0$ such that $L(a)\leq L'(a)\leq CL(a)$ for all $a\in\Alg$ so that $\alpha$ acts isometrically on $(A,L')$, note that the metric $d_{L_{\Gamma}})$ on $S(A)$ is invariant for $\Gamma$, Further
\begin{eqnarray}
\frac{1}{C} d_L(\phi,\psi)  \leq d_{L_{\Gamma}}(\phi,\psi) \leq  d_L(\phi,\psi) \leq C d_L(\phi,\psi) , \;\;\phi,\psi \in S(A)
\end{eqnarray}
so that (iii) holds.

To show (iii)$\Longrightarrow$(i)  let $\alpha: \Gamma \to \textup{Aut}(A)$ be such that $\hat{\alpha}$ is equicontinuous for $(S(A), d_L)$ and suppose that $d$ is another metric with $\frac{1}{C}d \leq d_L \leq Cd$ (pointwise) and such that $\Gamma$ implements an isometric action on $(S(A),d)$. As $L$ is lower semicontinuous, we may recover $L$ uniquely from $d_L$ by \cite[Theorem 4.1]{R.Metrics}, at least on the restriction of $L$ to $\Alg_{sa}= \{ a \in \Alg : a^*=a \}$, via
$$
L(a) = \sup \bigg{ \{ } \frac{ |\phi(a) - \psi(a)|} {d_L(\phi, \psi)} : \;\phi, \;\psi \in S(A), \;\phi \neq \psi \bigg{ \} },\quad a\in\Alg.
$$
The same formula with $d_L$ replaced by $d$ defines a $\Gamma$-invariant Lipschitz seminorm $L'$ on the same domain $\Alg$ as $L$. (Note that we are using $\alpha_g(\Alg) \subset \Alg$ for $g \in \Gamma$ here.)
It is easy to see that  $\frac{1}{C}L'(a) \leq L(a) \leq CL'(a)$ whenever $a \in \Alg$; moreover $d_{L'}=d$ because as mentioned before the definition of $d_{L'}$ only requires selfadjoint elements.
Since $L'$ is $\Gamma$-invariant the action of $\Gamma$ is smooth and isometric on  $(A,L')$ thus the action on $(A,L)$ is metrically equicontinuous. This concludes the proof.
\end{proof}


We now return to the discussion of the crossed products. Suppose that $\alpha$ is an action of a discrete group $\Gamma$ on a unital $C^*$-algebra $A$ and suppose that $A$ is equipped with an odd spectral triple $(\Alg,\Hil_A,D_A)$ via a faithful representation $\pi$ of $A$ and $C^*_r(\Gamma)$ is equipped with an odd spectral triple $(\mathbb C\Gamma,\ell^2(\Gamma),M_l)$ arising from a proper translation bounded function $l:\Gamma\rightarrow\Z$ as described in Example \ref{GroupTriple}. There is a covariant representation $(\tilde{\pi},\lambda)$ of $(A,\Gamma,\alpha)$ on $\Hil=\Hil_A\otimes\ell^2(\Gamma)$ given by
\begin{equation}\label{DefDirac1}
\tilde{\pi}(a)(\xi\otimes \delta_g)=\pi(\alpha_{g^{-1}}(a))\xi\otimes\delta_g,\quad
\lambda_h(\xi\otimes\delta_g)=\xi\otimes\delta_{hg},\qquad a\in A,\ \xi\in\Hil_A,\ g,h\in \Gamma.
\end{equation}
We can use this to view the reduced crossed product $A\rtimes_{r,\alpha}\Gamma$ as a subalgebra of $\mathbb B(\Hil)$. Just as in the tensor product situation we define a  Dirac operator $D$ on $\Hil\oplus\Hil$ by
\begin{equation}\label{DefDirac2}
D=
\begin{bmatrix}
0 & D_A \otimes 1 - i \otimes M_l \\
D_A \otimes 1 + i \otimes M_l & 0
\end{bmatrix}.
\end{equation}
Write $C_c(\Gamma,\Alg)$ for the natural dense $^*$-subalgebra of $A\rtimes_{r,\alpha}\Gamma$ consisting of those finite sums $\sum_{g\in\Gamma}x_g\lambda_g$, where each $x_g\in\Alg$. Since we regard $A\rtimes_{r,\alpha}\Gamma$ as represented on $\mathbb B(\Hil)$, we write $[\pi(a),D_A]$ when we want to compute commutators of elements $a\in\Alg$ with $D_A$ on $\Hil_A$. In the theorem below, we do not use the terminology of metrically equicontinuous actions, as we do not assume that $D_A$ satisfies the Lipschitz-condition.

\begin{theorem} \label{constr}
Let $(\mathcal{A}, \Hil_{A}, D_A)$ be an odd spectral triple on a unital $C^*$-algebra A and assume that $\alpha: \Gamma \to \textup{Aut} (A)$ is an action of a discrete group $\Gamma$, equipped with a proper translation bounded function $l$, such that $\alpha_g(\mathcal{A}) \subset \mathcal{A}$ for all $g \in \Gamma$ and
\begin{equation}\label{Hypothesis}
\sup_{g\in \Gamma}\|[D_A,\pi(\alpha_g(a))]\|<\infty,\quad a\in\Alg.
\end{equation}
Then $(C_c (\Gamma, \mathcal{A}), \Hil \oplus \Hil, D)$, with $D$ defined via (\ref{DefDirac2}), is an even spectral triple on $A\rtimes_{r,\alpha}\Gamma$.  If $D_A$ is $p$-summable and $M_l$ is $q$-summable, then $D$ is $(p+q)$-summable.  If $D_A$ is non-degenerate, then so too is $D$.
\end{theorem}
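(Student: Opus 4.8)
The plan is to follow the tensor product construction recalled above, pinpointing at each stage which hypothesis---global invariance $\alpha_g(\Alg)\subset\Alg$, the uniform bound (\ref{Hypothesis}), or the defining properties of $l$---is doing the work. The grading is the obvious $\Z_2$-grading of $\Hil\oplus\Hil$, for which the representation $\tilde\pi\oplus\tilde\pi$ is even and $D$, being of the stated off-diagonal form, is odd; only the commutator estimate genuinely differs from the tensor product case.

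\textbf{Self-adjointness, compactness, summability.} I would fix an orthonormal basis $(e_m)_m$ of $\Hil_A$ of eigenvectors of $D_A$, with eigenvalues $\lambda_m$. Then $(e_m\otimes\delta_g)_{m,\,g\in\Gamma}$ is an orthonormal basis of $\Hil$ consisting of joint eigenvectors of the strongly commuting self-adjoint operators $D_A\otimes 1$ and $1\otimes M_l$, with eigenvalues $\lambda_m$ and $l(g)$. On the span of $(e_m\otimes\delta_g,0)$ and $(0,e_m\otimes\delta_g)$ the operator $D$ acts by the self-adjoint matrix $\left(\begin{smallmatrix}0 & \lambda_m-il(g)\\ \lambda_m+il(g)&0\end{smallmatrix}\right)$, with eigenvalues $\pm\sqrt{\lambda_m^2+l(g)^2}$; hence $D$ is essentially self-adjoint on the dense span of these vectors, is diagonalisable, and $D^2$ equals $(D_A^2\otimes 1+1\otimes M_l^2)^{\oplus 2}$ on this core. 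Since $\lambda_m^2+l(g)^2\le N$ forces $|\lambda_m|\le\sqrt N$ and $|l(g)|\le\sqrt N$, and there are only finitely many such $m$ (the eigenspaces of $D_A$ are finite dimensional and $|\lambda_m|\to\infty$) and only finitely many such $g$ (properness of $l$), every spectral projection of $D$ has finite rank and the eigenvalues of $D$ increase to infinity, so $(1+D^2)^{-1/2}$ is compact. The same count together with the elementary inequality $(x+y-1)^{-(\alpha+\beta)}\le x^{-\alpha}y^{-\beta}$ ($x,y>1$, $\alpha,\beta>0$), applied with $x=1+\lambda_m^2$, $y=1+l(g)^2$, $\alpha=p/2$, $\beta=q/2$, gives $(p+q)$-summability exactly as in the tensor product discussion, the finitely many pairs with $\lambda_m=0$ or $l(g)=0$ being absorbed into one-variable sums.

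\textbf{The commutator condition.} The representation of $A\rtimes_{r,\alpha}\Gamma$ on $\Hil\oplus\Hil$ is $\tilde\pi\oplus\tilde\pi$, so $[D,\tilde\pi(a)\oplus\tilde\pi(a)]$ is off-diagonal with corners $[T,\tilde\pi(a)]$ and $[T^*,\tilde\pi(a)]$, where $T=D_A\otimes 1+i(1\otimes M_l)$. By linearity it suffices to treat $a=x_h\lambda_h$ with $x_h\in\Alg$. Using $[D_A\otimes 1,\lambda_h]=0$ and $[1\otimes M_l,\tilde\pi(x_h)]=0$ one obtains
\[
[T,\tilde\pi(x_h)\lambda_h]=[D_A\otimes 1,\tilde\pi(x_h)]\,\lambda_h+i\,\tilde\pi(x_h)\,[1\otimes M_l,\lambda_h].
\]
Here $[D_A\otimes 1,\tilde\pi(x_h)]$ acts on $\Hil_A\otimes\delta_g$ as $[D_A,\pi(\alpha_{g^{-1}}(x_h))]$, which is bounded uniformly in $g$ by (\ref{Hypothesis}) (this is also where $\alpha_g(\Alg)\subset\Alg$ is used, to make sense of the commutator), while $[1\otimes M_l,\lambda_h]=\lambda_h(1\otimes M_{f_h})$ with $f_h(g)=l(hg)-l(g)$, bounded because $l$ is translation bounded. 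Thus $[T,\tilde\pi(x_h)\lambda_h]$, and likewise $[T^*,\tilde\pi(x_h)\lambda_h]$, is bounded. Before forming commutators one must check that $\tilde\pi(x_h)\lambda_h$ preserves $\mathrm{dom}(D)=\big(\mathrm{dom}(D_A\otimes 1)\cap\mathrm{dom}(1\otimes M_l)\big)^{\oplus 2}$; this is a routine fibrewise estimate using the same two facts (for the $D_A\otimes 1$ direction write $D_A\pi(\alpha_{g^{-1}}(x_h))\eta_g=\pi(\alpha_{g^{-1}}(x_h))D_A\eta_g+[D_A,\pi(\alpha_{g^{-1}}(x_h))]\eta_g$ and invoke (\ref{Hypothesis}); for the $1\otimes M_l$ direction use $\|\tilde\pi(x_h)\|\le\|x_h\|$ on each fibre and $l(hg)^2\le 2l(g)^2+2\sup_g|f_h(g)|^2$). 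I expect this domain bookkeeping to be the only genuinely fiddly point: conceptually everything sits in the two commutator identities above, each controlled by exactly one hypothesis, the new ingredient relative to the tensor product being the non-vanishing term $[1\otimes M_l,\lambda_h]$, which is precisely what forces $l$ to be translation bounded.

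\textbf{Non-degeneracy.} Since $\pi$ is faithful, the induced regular representation $\tilde\pi$ of $A\rtimes_{r,\alpha}\Gamma$ is faithful, hence so is $\tilde\pi\oplus\tilde\pi$. Suppose $a=\sum_h x_h\lambda_h\in C_c(\Gamma,\Alg)$ satisfies $[D,\tilde\pi(a)\oplus\tilde\pi(a)]=0$; then $[T,\tilde\pi(a)]=[T^*,\tilde\pi(a)]=0$, hence both $[D_A\otimes 1,\tilde\pi(a)]=0$ and $[1\otimes M_l,\tilde\pi(a)]=0$. Evaluating $[1\otimes M_l,\tilde\pi(a)]=\sum_h\tilde\pi(x_h)\lambda_h(1\otimes M_{f_h})$ on $\xi\otimes\delta_g$ and using faithfulness of $\pi$, invertibility of $\alpha_{(hg)^{-1}}$ and orthogonality of the $\delta_{hg}$ forces $(l(hg)-l(g))x_h=0$ for all $g$; taking $g$ to be the identity and using $l(h)=0\iff h=e$ gives $x_h=0$ for $h\neq e$, so $a=x_e\in\Alg$. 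Then $[D_A\otimes 1,\tilde\pi(x_e)]=0$ says $[D_A,\pi(\alpha_{g^{-1}}(x_e))]=0$ for all $g$; taking $g$ the identity and invoking non-degeneracy of $D_A$ yields $x_e\in\C 1$. Hence $a\in\C 1$, and $D$ is non-degenerate.
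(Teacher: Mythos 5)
Your proof is correct and follows essentially the same route as the paper's: self-adjointness, compact resolvent and summability are reduced to the tensor-product eigenvalue computation, and boundedness of the commutators is obtained by splitting $D$ into its $D_A\otimes 1$ and $1\otimes M_l$ parts, controlled respectively by hypothesis (\ref{Hypothesis}) and translation boundedness of $l$. The only cosmetic difference is in the non-degeneracy step, where you use the vanishing of $[1\otimes M_l,\cdot]$ first to kill the Fourier coefficients $x_h$ for $h\neq e$ and then treat $x_e$, whereas the paper first uses $[D_A\otimes 1,\cdot]=0$ to make every coefficient scalar and then invokes non-degeneracy of the group triple; both variants are equally valid, and your explicit domain-preservation check is a welcome addition that the paper leaves implicit.
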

\begin{proof}
As the Dirac operator is given by (\ref{DefDirac2}) it is densely defined and self-adjoint with compact resolvent, just as in the tensor product situation described above.  The summability statement also follows immediately from the same statement for the tensor product triple.  For $\xi$ in the domain of $D_A$, $a\in\Alg$ and $h\in\Gamma$, we have
\begin{equation}
[D_A\otimes 1,a](\xi\otimes\delta_h)=[D_A,\pi(\alpha_{h}(a))]\xi\otimes\delta_{h}.
\end{equation}
Thus the hypothesis (\ref{Hypothesis}) ensures that $[D_A\otimes 1,a]$ is bounded for $a\in\Alg$.  As $[D_A\otimes 1,\lambda_g]=0$ for all $g\in\Gamma$, it follows that $[D_A\otimes 1,x]$ is bounded for all $x\in C_c(\Gamma,\Alg)$. In a similar fashion, direct computations give
\begin{equation}
[1\otimes M_l,a\lambda_h]=(1\otimes M_{l_h})a\lambda_h,\quad a\in\Alg,\ h\in\Gamma,
\end{equation}
where $M_{l_h}$ is the operator of multiplication by the function $l_h(x)=l(x)-l(h^{-1}x)$ on $\ell^2(\Gamma)$. This is bounded as $l$ is translation bounded.  Thus $[D,x\oplus x]$ is bounded whenever $x\in C_c(\Gamma,\Alg)$.

Suppose that $D_A$ is non-degenerate for $A$ and suppose $x=x^*=\sum_{g\in\Gamma}x_g\lambda_g$ is a finite sum with each $x_g\in\Alg$ such that $[D,x\oplus x]=0$. Then, $[D_A\otimes 1,x]=[1\otimes M_l,x]=0$. Since
$$
0=\ip{[D_A\otimes 1,x](\xi\otimes\delta_{g^{-1}})}{\eta\otimes\delta_e}=\ip{[D_A,\pi(x_g)]\xi}{\eta}
$$
for $\xi$ and $\eta$ in the domain of $D_A$, it follows that each $[D_A,\pi(x_g)]=0$ and so each $x_g\in\mathbb C1$, whence $x\in\mathbb C\Gamma$. Since the triple $(\mathbb C\Gamma,\ell^2(\Gamma),M_l)$ is non-degenerate, $x\in\mathbb C1$ and $(C_c(\Gamma, \mathcal{A}),\Hil\oplus\Hil,D)$ is non-degenerate, and this completes the proof.
\end{proof}

Note that when the action $\alpha$ is trivial, the crossed product $A\rtimes_{r,\alpha}\Gamma$ is the spatial tensor product $A\otimes C^*_r(\Gamma)$ and the triple we obtain from Theorem \ref{constr} is just the tensor product triple. The following proposition will be used in the last part of this section, when we discuss the iterates of the above construction.

\begin{proposition}\label{iterate}
Let $A$, $(\mathcal{A}, \Hil_{A}, D_A)$, $\Gamma$ and $\alpha: \Gamma \to \textup{Aut} (A)$ satisfy all the assumptions of Theorem \ref{constr}. Suppose that $\Gamma'$ is another discrete group, $\beta: \Gamma' \to \textup{Aut} (A)$  is an action which commutes with $\alpha$ (i.e.\ for each $g \in \Gamma$, $g' \in \Gamma'$ we have $\alpha_{g} \beta_{g'} = \beta_{g'} \alpha_{g}$), is smooth (i.e.\ $\beta_{g'}(\mathcal{A}) \subset \mathcal{A}$ for all $g' \in \Gamma'$) and such that
\[
\sup_{g'\in \Gamma'}\|[D_A,\pi(\beta_{g'}(a))]\|<\infty,\quad a\in\Alg.
\]
Let $\wt{\beta}$ denote the canonical extension of $\beta$ to the action of $\Gamma'$ on $A\rtimes_{r,\alpha}\Gamma$ and let $(C_c (\Gamma, \mathcal{A}), \Hil \oplus \Hil, D)$ denote the spectral triple constructed in Theorem \ref{constr}. Then
\[
\sup_{g'\in \Gamma'}\|[D,\wt{\beta}_{g'}(b)]\|<\infty,\quad b\in C_c (\Gamma, \mathcal{A}).
\]
\begin{proof}
Elementary check.
\end{proof}

\begin{remark} We are grateful to the referee's comments leading to this remark. In certain situations we can describe our construction of $D$ essentially as an equivariant Kasparov product.
In the situation of Theorem \ref{constr} suppose there exists a unitary action $u: \Gamma \to B(\Hil_{A})$ on  $\Hil_{A}$ implementing $\alpha$ such that $u_g$ leaves the domain of $D_{A}$ invariant for all $g \in \Gamma$ and $[u_g, D_A]$ extends to a bounded operator on $\Hil_{A}$ (equivalently $g(D_A)-D_A$ is bounded for all $g \in \Gamma$).  This means that $(\mathcal{A}, \Hil_{A}, D_A)$ is an unbounded equivariant Kasparov module defining a class in
$KK_1^{\Gamma}(A,\C)$. Then it is easy to see that our assumption (\ref{Hypothesis}) is verified if
$g(D_A)-D_A$ is uniformly bounded in norm for all $g \in \Gamma$. Moreover, a translation bounded function $l:\Gamma \to \R$ defines an unbounded equivariant spectral triple $(\C, M_l , \ell^2(\Gamma))$ giving an element in $KK^{\Gamma}(\C,\C)$, where the group action is given by $\lambda_g$. Using the correspondence between bounded and unbounded Kasparov modules (\cite{Bla}, Section 17)) the external Kasparov product of the equivariant triples $(\mathcal{A}, \Hil_{A}, D_A)$
and $(\C, M_l , \ell^2(\Gamma))$ is given by the same formula as our Dirac operator $D$. However, the action of $A$ is now given by
$a(\xi \otimes \delta_g)= \pi (a)\xi \otimes \delta_g$ and the action of $\Gamma$ by $h(\xi \otimes \delta_g)= u_h\xi \otimes \delta_{hg}$ on both
summands of $\Hil_A \otimes \ell^2(\Gamma)$. The unitary $U(\xi \otimes \delta_g)=u_g \xi \otimes \delta_g$ conjugates this action
into our action (\ref{DefDirac1}) and $D$ into $(U \oplus U)D(U\oplus U)^*$ which by assumption (\ref{Hypothesis}) is a bounded perturbation of $D$, so defines the same
class in $KK^{\Gamma}$. This class in $KK_0^{\Gamma}(A,\C)$ can be regarded as a class in $K^0(A\rtimes_{\alpha,r} \Gamma)$ via the Green-Julg Theorem.

Alternatively, we can think of our construction as the equivariant external product of $(\mathcal{A}, \Hil_{A}, D_A)$ in $KK_1^{\Gamma}(A,\C)$ and
$(\C , M_l, c_0(\Gamma))$ in $KK_1^{\Gamma}(\C,  c_0(\Gamma))$ combined with the standard morphism
$j_{\Gamma} : KK_0^{\Gamma} (A,c_0(\Gamma)) \to KK_0(A \rtimes_{\alpha,r} \Gamma , c_0(\Gamma)\rtimes_{\alpha,r} \Gamma)=KK_0(A \rtimes_{\alpha,r} \Gamma , \C)$ (cf.\;\cite{Bla} 20.6), again in case we start with an equivariant triple.
\end{remark}

\end{proposition}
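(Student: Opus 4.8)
The plan is to track the constants through the proof of Theorem~\ref{constr}. Fix $b=\sum_{h\in\Gamma}x_h\lambda_h\in C_c(\Gamma,\Alg)$, a finite sum with $x_h\in\Alg$. Since $\beta$ commutes with $\alpha$, the canonical extension $\wt\beta$ acts on the dense subalgebra by $\wt\beta_{g'}(b)=\sum_{h\in\Gamma}\beta_{g'}(x_h)\lambda_h$, and smoothness of $\beta$ gives $\beta_{g'}(x_h)\in\Alg$; hence $\wt\beta_{g'}(b)\in C_c(\Gamma,\Alg)$ for every $g'\in\Gamma'$, and Theorem~\ref{constr} already tells us that $[D,\wt\beta_{g'}(b)\oplus\wt\beta_{g'}(b)]$ is bounded. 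So the content of the proposition is only the uniformity of this norm in $g'$, and I would obtain it by re-examining the two summands appearing in the proof of Theorem~\ref{constr}: writing $E=D_A\otimes1+i\otimes M_l$, the operator $[D,y\oplus y]$ is off-diagonal with corners $[E,y]$ and $[E^*,y]$, and since $[E,y]=[D_A\otimes1,y]+i[1\otimes M_l,y]$ one has $\|[D,y\oplus y]\|\le\|[D_A\otimes1,y]\|+\|[1\otimes M_l,y]\|$ for every $y\in C_c(\Gamma,\Alg)$.

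Apply this with $y=\wt\beta_{g'}(b)$. For the $M_l$-term, the identity $[1\otimes M_l,a\lambda_h]=(1\otimes M_{l_h})a\lambda_h$ from the proof of Theorem~\ref{constr} gives $[1\otimes M_l,\wt\beta_{g'}(b)]=\sum_h(1\otimes M_{l_h})\tilde\pi(\beta_{g'}(x_h))\lambda_h$, whose norm is at most $\sum_h\|l_h\|_\infty\|x_h\|$ since $\beta_{g'}$ is isometric on $A$; this is finite (finitely many $h$, $l$ translation bounded) and independent of $g'$. For the $D_A\otimes1$-term, using $[D_A\otimes1,\lambda_h]=0$ together with the fact that $[D_A\otimes1,\tilde\pi(c)]$ acts on $\xi\otimes\delta_g$ as $[D_A,\pi(\alpha_g(c))]\xi\otimes\delta_g$, one gets $\|[D_A\otimes1,\wt\beta_{g'}(b)]\|\le\sum_h\sup_{g\in\Gamma}\|[D_A,\pi(\alpha_g\beta_{g'}(x_h))]\|$, and here I would invoke the commutation $\alpha_g\beta_{g'}=\beta_{g'}\alpha_g$ to read the summand as $\sup_{g\in\Gamma}\|[D_A,\pi(\beta_{g'}(\alpha_g(x_h)))]\|$.

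The one step that is genuinely a \emph{check} rather than bookkeeping is to see that $\sup_{g\in\Gamma,\,g'\in\Gamma'}\|[D_A,\pi(\alpha_g\beta_{g'}(x_h))]\|<\infty$; this is where the two hypotheses must interact, since neither the $\alpha$-bound nor the $\beta$-bound on its own prevents the inner $\Gamma$-supremum from growing with $g'$. I expect this to be the main obstacle. In the principal case $\Alg=\smooth$ it is automatic: then $L_{D_A}$ is closed, and the open mapping theorem argument from the proof of Proposition~\ref{equicontthm} upgrades hypothesis~(\ref{Hypothesis}) to a bound $\sup_{g\in\Gamma}\|[D_A,\pi(\alpha_g(a))]\|\le K\big(\|a\|+\|[D_A,\pi(a)]\|\big)$ valid for all $a\in\smooth$; feeding in $a=\beta_{g'}(x_h)$, using $\|\beta_{g'}(x_h)\|=\|x_h\|$ and the hypothesis on $\beta$, bounds the double supremum by $K\big(\|x_h\|+\sup_{g'}\|[D_A,\pi(\beta_{g'}(x_h))]\|\big)<\infty$. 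More generally the joint bound holds whenever the combined $\Gamma\times\Gamma'$-action on $A$ satisfies the hypothesis of Theorem~\ref{constr}, which is exactly the situation in the intended application, the iteration producing triples from equicontinuous actions of $\Z^d$ (or it may simply be added to the hypotheses). Granting the joint bound, summing the two estimates and taking the supremum over $g'$ yields $\sup_{g'\in\Gamma'}\|[D,\wt\beta_{g'}(b)\oplus\wt\beta_{g'}(b)]\|\le\sum_h\|l_h\|_\infty\|x_h\|+\sum_h\sup_{g,g'}\|[D_A,\pi(\alpha_g\beta_{g'}(x_h))]\|<\infty$, which is the assertion; everything apart from that joint supremum is the same line-by-line computation as in Theorem~\ref{constr}.
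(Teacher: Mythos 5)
Your computation is precisely the ``elementary check'' the paper has in mind: split $[D,\,\cdot\,]$ into the $D_A\otimes 1$ and $1\otimes M_l$ parts, observe that the $M_l$ part of $[D,\wt\beta_{g'}(b)]$ is dominated by $\sum_h\|l_h\|_\infty\|x_h\|$ uniformly in $g'$ because automorphisms are isometric, and reduce the $D_A\otimes 1$ part to $\sum_h\sup_{g\in\Gamma}\|[D_A,\pi(\alpha_g\beta_{g'}(x_h))]\|$. Since the paper records no further detail, there is nothing else to compare against.

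What you have correctly isolated is the one step that is not purely formal: the needed quantity is a \emph{joint} supremum over $\Gamma\times\Gamma'$, whereas the hypotheses of the proposition only supply the two iterated suprema separately, and a function on $\Gamma\times\Gamma'$ can be bounded on every ``row'' and every ``column'' without being bounded globally. The commutativity of $\alpha$ and $\beta$ does not repair this by itself. Both of your resolutions are sound: if $\Alg=\smooth$ then $L_{D_A}$ is closed, and the open-mapping argument from the proof of Proposition \ref{equicontthm} upgrades hypothesis (\ref{Hypothesis}) to $\sup_{g\in\Gamma}\|[D_A,\pi(\alpha_g(a))]\|\leq K\bigl(\|a\|+\|[D_A,\pi(a)]\|\bigr)$, which applied to $a=\beta_{g'}(x_h)$ gives the joint bound from the $\beta$-hypothesis; and in the only place the proposition is invoked (the iteration in Theorem \ref{Zd}) the full $\Z^d$-action is assumed metrically equicontinuous, so the joint bound is already part of the data. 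Strictly speaking, the proposition as stated should either assume $L_{D_A}$ closed on $\Alg$ or assume the $\Gamma\times\Gamma'$-bound outright; your write-up is the careful version of the paper's one-line proof, and the conclusion and all estimates are correct.
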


\subsection{The Lipschitz condition for spectral triples on crossed products}

It is natural to ask whether the crossed product triple constructed in Theorem \ref{constr} turns $A\rtimes_{r,\alpha}\Gamma$ into a compact quantum metric space and in this subsection we discuss when this can be achieved. Note that, restricted to $A$, the crossed product triple always satisfies the Lipschitz condition when $(\Alg,\Hil_A,D_A$) does.  In particular, given an equivariant action on a spectral quantum metric space, this process produces a new spectral quantum metric on the original space $A$ under which the group acts by isometries. This is formulated in the following corollary.
\begin{corollary} \label{spectralqms}
Let $(\mathcal{A}, \Hil_{A}, D_A)$ be an odd spectral triple on a unital $C^*$-algebra $A$ satisfying the Lipschitz condition. Suppose that $\Gamma$ is a discrete group which admits a proper translation bounded function $l$.  Assume that $\alpha: \Gamma \to \textup{Aut} (A)$ is a metrically equicontinuous action of  $\Gamma$ on the spectral quantum metric space $(A,L_{D_A})$. Then there exists an (even) spectral triple $(\mathcal A,(\Hil_A\otimes\ell^2(\Gamma))\oplus (\Hil_A\otimes\ell^2(\Gamma)), D)$ on $A$ satisfying the Lipschitz condition such that $\alpha$ acts by isometries (in the sense of Definition \ref{equicont}) on the spectral  quantum  metric space $(A,L_{D})$.
\end{corollary}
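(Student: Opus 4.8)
The plan is to apply Theorem \ref{constr} to the odd triple $(\mathcal A,\Hil_A,D_A)$ and the action $\alpha$, using metric equicontinuity to verify the hypothesis \eqref{Hypothesis}, and then to check that the resulting even triple, restricted to $A$, still induces a Lip-metric and that $\alpha$ acts isometrically for it. First I would note that since $(A,L_{D_A})$ is a compact quantum metric space, $L_{D_A}$ is lower semicontinuous (\cite[Proposition 3.7]{R.Metrics}), and as we may take $\Alg=\smooth$ it is in fact closed; hence Proposition \ref{equicontthm}, implication (i)$\Longrightarrow$(ii), applies and gives a constant $C>0$ with
\[
L_{D_A}(a)\le L_\Gamma(a)=\sup_{g\in\Gamma}L_{D_A}(\alpha_g(a))\le C\,L_{D_A}(a),\qquad a\in\Alg.
\]
In particular $\sup_{g\in\Gamma}\|[D_A,\pi(\alpha_g(a))]\|<\infty$ for all $a\in\Alg$, so hypothesis \eqref{Hypothesis} of Theorem \ref{constr} holds, and Theorem \ref{constr} produces an even spectral triple $(C_c(\Gamma,\mathcal A),(\Hil_A\otimes\ell^2(\Gamma))\oplus(\Hil_A\otimes\ell^2(\Gamma)),D)$ on $A\rtimes_{r,\alpha}\Gamma$, which is non-degenerate because $D_A$ is.

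Next I would restrict this triple to the canonical copy of $A$ inside $A\rtimes_{r,\alpha}\Gamma$, with dense subalgebra $\Alg$, and compute the induced Lipschitz seminorm. From the formula $[D_A\otimes 1,a](\xi\otimes\delta_h)=[D_A,\pi(\alpha_h(a))]\xi\otimes\delta_h$ in the proof of Theorem \ref{constr}, together with $[1\otimes M_l,a]=0$ for $a\in A$ (as the $a$-action is diagonal in the $\delta_g$'s and commutes with $M_l$ when $\Gamma$ acts trivially on $\Alg$ — here one checks $[1\otimes M_l,a]=0$ since $a$ preserves each $\C\delta_g$), one sees that
\[
\|[D,a\oplus a]\|=\|[D_A\otimes 1,a]\|=\sup_{h\in\Gamma}\|[D_A,\pi(\alpha_h(a))]\|=L_\Gamma(a),\qquad a\in\Alg.
\]
Thus $L_{D}|_{\Alg}=L_\Gamma$. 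Since $L_{D_A}\le L_\Gamma\le C\,L_{D_A}$, the seminorm $L_\Gamma$ is equivalent to $L_{D_A}$, hence gives the same uniformity on $S(A)$ as $L_{D_A}$ — in particular the set $\{a\in\Alg:L_\Gamma(a)\le 1\}$ has totally bounded image in $A/\C1$ because $\{a\in\Alg:L_{D_A}(a)\le 1\}$ does and the former is contained in $C$ times the latter — so $(A,L_{D})$ is again a spectral quantum metric space. Finally, $L_\Gamma$ is manifestly $\alpha$-invariant: $L_\Gamma(\alpha_g(a))=\sup_{h}L_{D_A}(\alpha_{hg}(a))=L_\Gamma(a)$; hence $\alpha$ acts isometrically on $(A,L_{D})$ in the sense of Definition \ref{equicont}, and $\alpha_g(\Alg)\subset\Alg$ by hypothesis, so the action is smooth. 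This is exactly the assertion of the corollary.

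The only genuinely delicate point is the identification $L_{D}|_{\Alg}=L_\Gamma$, and specifically the claim that $\|[D_A\otimes 1,a]\|=\sup_{h\in\Gamma}\|[D_A,\pi(\alpha_h(a))]\|$: since $[D_A\otimes 1,a]$ is the direct sum over $h\in\Gamma$ of the operators $[D_A,\pi(\alpha_h(a))]$ acting on the subspaces $\Hil_A\otimes\delta_h$, its norm is precisely the supremum of the norms of these blocks, which is finite by hypothesis — this is the one spot where one must be slightly careful, but it is a routine direct-sum computation rather than a real obstacle. Everything else is bookkeeping: transporting the totally-bounded-image condition across an equivalence of Lipschitz seminorms, and reading off $\alpha$-invariance of $L_\Gamma$ from its definition.
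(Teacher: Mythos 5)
Your argument is correct and follows the paper's own proof: apply Theorem \ref{constr}, restrict the resulting triple to $\Alg\subseteq A$, identify $L_D|_{\Alg}=L_{\Gamma}$ via the block-diagonal computation of $[D_A\otimes 1,a]$, deduce the Lipschitz condition from $L_{D_A}\le L_D$, and read off the $\alpha$-invariance of $L_{\Gamma}$. The only deviation is that your appeal to Proposition \ref{equicontthm}(i)$\Longrightarrow$(ii) (and hence to closedness of $L_{D_A}$, i.e.\ taking $\Alg=\smooth$, which the corollary does not assume) is superfluous: hypothesis (\ref{Hypothesis}) is literally the definition (\ref{Hypothesismetric}) of metric equicontinuity, and total boundedness of the image of $\{a\in\Alg: L_{\Gamma}(a)\le 1\}$ already follows from the one-sided inequality $L_{D_A}\le L_{\Gamma}$ without the constant $C$.
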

\begin{proof}
It suffices to observe that the Dirac operator constructed in Theorem \ref{constr} can be used to consider the `restricted' spectral triple $(\Alg, \Hil \oplus \Hil, D)$. It is easy to check that for $a\in \Alg$ we have $L_{D} (a)=\sup_{g \in \Gamma} \|[D_A, \pi(\alpha_{g}(a))]\|=\sup_{g\in\Gamma}L_{D_A}(\pi(\alpha_g(a))$. Since $L_D(a)\geq L_{D_A}(a)$ for $a\in\Alg$, $D$ inherits the Lipschitz condition from $D_A$.  As noted before Proposition \ref{equicontthm}, $\alpha$ acts by isometries on $L_D$, completing the proof.
\end{proof}

The pseudometric $d$ induced by $D$ according to (\ref{DefD}) is equivalent to the pseudometric $d_{1}$ defined by
$$
d_{1}(\omega_1,\omega_2)=\sup\{|\omega_1(x)-\omega_2(x)|:x\in C_c(\Gamma,\Alg),\ \|[x,D_A\otimes 1]\|,\ \|[x,1\otimes M_l]\|\leq 1\}.
$$
Indeed, recall that, as with all Lipschitz type pseudometrics arising from Dirac operators, $d_1$ is unchanged by
additionally demanding that $x=x^*$ in the defining supremum above (see \cite[Section 2]{R.Memoir}). The
equivalence of $d$ and $d_1$ now follows, as if $x=x^*\in C_c(\Gamma,\Alg)$ satisfies $\|[x\oplus x,D]\|\leq 1$ then
$\|[x,D_A\otimes 1+i\otimes M_l]\|\leq 1$ and taking real and imaginary parts gives $\|[x,D_A\otimes 1]\|\leq 1$
and $\|[x,1\otimes M_l]\|\leq 1$. Conversely, if $x=x^*\in C_c(\Gamma,\Alg)$ satisfies $\|[x,D_A\otimes 1]\|\leq 1$ and
$\|[x,1\otimes M_l]\|\leq 1$, then $\|[x\oplus x,D]\|\leq 2$. Hence Rieffel's characterisation of Lip-metrics in
\cite{R.Metrics} shows that $d$ is a Lip-metric if and only if $D$ is non-degenerate and the set
$$
\{x\in C_c(\Gamma, \Alg):\|[x,D_A\otimes 1]\|,\ \|[x,1\otimes M_l]\|\leq 1\}
$$
has a totally bounded image in $(A \rtimes_{r,\alpha}\Gamma)/\mathbb C1$.

Like \cite{bmr} we are able to show that $A\rtimes_{r,\alpha}\Gamma$ is a compact quantum metric space in the
case that $\Gamma=\Z$ is equipped with the identity function $\iota$ as a proper translation bounded function.
The key tool we use to do this is a cut-down procedure of Ozawa and Rieffel from \cite{OR} which, in this
instance, enables us to uniformly approximate elements $x\in C_c(\Z,\Alg)$ with $\|[x,1\otimes M_\iota]\|\leq 1$.
After the proof we discuss why we have been unable to get this to work for more general groups. Note that as we
are using  $\iota$ rather than the length function $n\mapsto |n|$, the resulting triple
$(C^*(\Z),\ell^2(\Z),M_\iota)$ on $C^*(\Z)$ is the generator of $K^1(C^*(\Z))$ which corresponds to the usual
Toeplitz-extension
$$
0 \to \Kk \to \Tt \to C(\T) \to 0
$$
and the triple $(C_c(\Z,\Alg),\Hil\oplus\Hil,D)$ to the Kasparov product of the Toeplitz-extension of the crossed
product
$$
0 \to A \otimes \Kk \to \Tt_{\alpha} \to A \rtimes_{\alpha} \Z \to 0
$$
and $(\Alg,\Hil_A,D_A)$.  This means that the triple $(C_c(\Z,\Alg),\Hil\oplus\Hil,D)$ arising in the
construction corresponds to the image of $[(\Alg,\Hil_A,D_A)] \in K^1(A)$ under the boundary map in the
Pimsner-Voiculescu sequence (c.f.\;\cite{pv}, \cite{pi} for details).

We are now ready to formulate the result.

\begin{theorem}\label{Lip-metric}
Let $(\Alg,\Hil_A,D_A)$ be an odd spectral triple on a unital $C^*$-algebra $A$. Assume  it satisfies the Lipschitz condition and let $\alpha:\Z\rightarrow\textup{Aut}(A)$ be a metrically equicontinuous action of $\Z$ on the spectral quantum metric space $(A,L_{D_A})$. Using the identity function $\iota:\Z\rightarrow\Z$ as a proper translation bounded function, we obtain an even spectral triple $(C_c(\Z,\Alg),\Hil\oplus\Hil,D)$ on $A\rtimes_\alpha\Z$ from Theorem \ref{constr}. The triple $(C_c(\Z,\Alg),\Hil\oplus\Hil,D)$ induces a Lip-metric on the state space of $A\rtimes_\alpha\Z$.
\end{theorem}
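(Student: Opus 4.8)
The plan is to verify Rieffel's criterion for the pseudometric $d$ to be a Lip-metric. Non-degeneracy of $D$ is automatic from Theorem~\ref{constr}, since a triple satisfying the Lipschitz condition is in particular non-degenerate; so, by the discussion immediately preceding the theorem (which also lets us restrict to self-adjoint elements), everything reduces to showing that
\[
B:=\{x=x^*\in C_c(\Z,\Alg):\ \|[x,D_A\otimes 1]\|\leq 1,\ \|[x,1\otimes M_\iota]\|\leq 1\}
\]
has totally bounded image in $(A\rtimes_\alpha\Z)/\C 1$. I write $x=\sum_n x_n\lambda^n$ for a general element of $C_c(\Z,\Alg)$ with $x_n\in\Alg$, and $E_0$ for the canonical contractive conditional expectation of $A\rtimes_\alpha\Z$ onto $A$.

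First I would record two coefficient estimates for $x\in B$. Exactly as in the computation $[1\otimes M_l,a\lambda_h]=(1\otimes M_{l_h})a\lambda_h$ in the proof of Theorem~\ref{constr}, here $\iota_h$ is the constant function $h$, so $[1\otimes M_\iota,x]$ is the operator corresponding to the crossed product element $\sum_n nx_n\lambda^n$; hence $\|\sum_n nx_n\lambda^n\|\leq 1$, and applying the contractive map $y\mapsto E_0(y\lambda^{-n})$ gives $\|x_n\|\leq 1/|n|$ for $n\neq 0$. Likewise, expanding $\langle[D_A\otimes 1,x](\xi\otimes\delta_m),\eta\otimes\delta_{m+k}\rangle=\langle[D_A,\pi(\alpha_{-(k+m)}(x_k))]\xi,\eta\rangle$ and letting $m$ range over $\Z$ yields $L_\Z(x_k):=\sup_{g\in\Z}\|[D_A,\pi(\alpha_g(x_k))]\|\leq 1$ for every $k$. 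Because $\alpha$ is metrically equicontinuous, the discussion after Definition~\ref{equicont} shows $(A,L_\Z)$ is a compact quantum metric space, so by Rieffel's characterisation $\{a\in\Alg:L_\Z(a)\leq 1\}$ has totally bounded image in $A/\C 1$; intersecting with a norm ball, the set $K:=\{a\in\Alg:\|a\|\leq 1,\ L_\Z(a)\leq 1\}$ is totally bounded in $A$.

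Next comes the cut-down, which I expect to be the main obstacle and the only place the choice of $\iota$ really enters. Let $\hat\beta:\T\to\Aut(A\rtimes_\alpha\Z)$ be the dual action, implemented on $\Hil$ by $1\otimes e^{itM_\iota}$, so $\hat\beta_t(x_n\lambda^n)=e^{int}x_n\lambda^n$, and let $\sigma_N$ be the Fejér kernel on $\T$. The averaged map
\[
\Phi_N(x):=\frac1{2\pi}\int_{-\pi}^{\pi}\hat\beta_t(x)\,\sigma_N(t)\,dt=\sum_{|n|\leq N}\Bigl(1-\tfrac{|n|}{N+1}\Bigr)x_n\lambda^n
\]
is unital completely positive and preserves $C_c(\Z,\Alg)$. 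For $x\in B$ the orbit $t\mapsto\hat\beta_t(x)$ is norm-differentiable with $\tfrac{d}{dt}\hat\beta_t(x)=i\,\hat\beta_t(\sum_n nx_n\lambda^n)$, of norm $\|[x,1\otimes M_\iota]\|\leq 1$; hence $\|x-\hat\beta_t(x)\|\leq|t|$ and so
\[
\|x-\Phi_N(x)\|\ \leq\ \frac1{2\pi}\int_{-\pi}^{\pi}|t|\,\sigma_N(t)\,dt\ =:\ \eps_N ,
\]
with $\eps_N\to 0$ as $N\to\infty$ because $(\sigma_N)$ is a positive approximate identity; crucially this bound is uniform over $x\in B$.

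Finally I would assemble the pieces. Fix $N$ and write $\Phi_N(x)+\C 1=(x_0+\C 1)+\sum_{1\leq|n|\leq N}(1-\tfrac{|n|}{N+1})x_n\lambda^n+\C 1$. The first summand ranges over the totally bounded image of $\{a\in\Alg:L_\Z(a)\leq 1\}$ in $A/\C 1$, which sits isometrically inside $(A\rtimes_\alpha\Z)/\C 1$; the second ranges over the image of $K^{2N}$ under the Lipschitz map $(a_n)_n\mapsto\sum_{1\leq|n|\leq N}(1-\tfrac{|n|}{N+1})a_n\lambda^n$, using $x_n\in K$ for $n\neq 0$, hence is totally bounded. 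A sum of two totally bounded subsets of a metric space is totally bounded, so $\{\Phi_N(x)+\C 1:x\in B\}$ is totally bounded; given $\eps>0$, choosing $N$ with $\eps_N<\eps/2$ then exhibits a finite $\eps$-net for the image of $B$, and with non-degeneracy $d$ is a Lip-metric. The point of difficulty, to be discussed after the proof, is that the Fejér estimate rests on $M_\iota$ being (a conjugate of) the generator of the dual circle action; for a general finitely generated $\Gamma$—and even for $\Z$ with the word-length function instead of $\iota$—no such dual-group averaging exists, and uniformly approximating $x\in C_c(\Gamma,\Alg)$ by finitely supported elements under control of both commutator norms requires the considerably more delicate cut-down of Ozawa and Rieffel \cite{OR}.
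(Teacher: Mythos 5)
Your proof is correct, and it reaches the same three structural milestones as the paper's argument (non-degeneracy from Theorem \ref{constr}, the two coefficient estimates $\|x_n\|\leq 1/|n|$ and $\sup_g\|[D_A,\pi(\alpha_g(x_n))]\|\leq\|[x,D_A\otimes 1]\|$, and total boundedness of the truncated sets), but it handles the crucial uniform cut-down step by a genuinely different device. The paper truncates sharply to $\sum_{|k|\leq N}x_k\lambda_k=\sum_{|m-n|\leq N}P_mxP_n$ and controls the tail by importing the near-diagonal cut-down estimate of Ozawa and Rieffel from \cite{OR} (adapted from $\mathbb N$-indexed to $\Z$-indexed spectral projections); you instead average $x$ over the dual circle action against the Fej\'er kernel, deriving $\|x-\hat\beta_t(x)\|\leq|t|\,\|[x,1\otimes M_\iota]\|$ from norm-differentiability of the orbit and concluding $\|x-\Phi_N(x)\|\leq\frac{1}{2\pi}\int|t|\sigma_N(t)\,dt\to 0$ uniformly on the unit commutator ball. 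Your route is more self-contained and quantitative (it yields an explicit $O(\log N/N)$ rate and needs only positivity of the Fej\'er kernel rather than the OR machinery), and the harmless change of coefficients by the factors $1-|n|/(N+1)\in[0,1]$ costs nothing since the relevant sets are balanced; the price is that it leans entirely on $M_\iota$ generating a compact dual-group action, which is exactly the limitation you correctly flag and which the paper's remark after the theorem also identifies as the obstruction to treating general $\Gamma$ or even $\Z$ with a word-length function. Two minor points worth making explicit if you write this up: the total boundedness of $\{a\in\Alg:L_\Z(a)\leq 1\}$ modulo scalars does follow from Rieffel's characterisation applied to the seminorm $L_\Z$ once one knows $d_{L_\Z}$ metrises the weak-$^*$ topology (the homeomorphism noted after Definition \ref{equicont}), but for the $n=0$ term the weaker and immediate bound $L_{D_A}(x_0)\leq 1$ already suffices, matching what the paper uses; and the embedding $A/\C 1\hookrightarrow(A\rtimes_\alpha\Z)/\C 1$ is isometric because $\C 1\subseteq A$ and $A$ sits isometrically in the crossed product, which is what lets you transfer total boundedness of the zeroth summand.
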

\begin{proof}
Note first that the action $\alpha$ satisfies the conditions of Theorem \ref{constr}. The triple $(\Alg,\Hil,D_A)$ is non-degenerate and so Theorem \ref{constr} shows that the crossed product triple is also non-degenerate. By the comments preceding the proof, it suffices to show that the image of
$$
\mathcal S=\{x\in C_c(\Z,\Alg):\|[x,D_A\otimes 1]\|\leq 1,\ \|[x,1\otimes M_\iota]\|\leq 1\}
$$
in $(A\rtimes_\alpha\Z)/\mathbb C1$ is totally bounded. Our hypothesis that the original triple induces a Lip-metric ensures that the set $\{a\in\Alg:\|a\|\leq 1,\ \|[\pi(a),D_A]\|\leq 1\}$ is totally bounded in $A$ (see \cite[Theorem 1.9]{R.MetricStateActions}). It then follows that, for each $N\in\mathbb N$, the set
$$
\mathcal S_N=\left\{\sum_{k=-N}^Na_k\lambda_k:a_k\in\Alg,\ \|[\pi(a_k),D_A]\|\leq 1\text{ and, when }k\neq 0,\ \|a_k\|\leq 1\right\}
$$
has a totally bounded image in $(A\rtimes_\alpha\Z)/\mathbb C1$. Indeed, given $\eps>0$, we can choose a finite $\eps/(2N+1)$-net $F\subset\{a\in\Alg:\|a\|\leq1,\ \|[a,D_A]\|\leq 1\}$ for the totally bounded set $\{a\in\Alg:\|a\|\leq 1,\ \|[a,D_A]\|\leq1\}$ and a finite set $F_0\subset\{a\in\Alg:\|[\pi(a),D_A]\|\leq 1\}$ whose image is an $\eps/(2N+1)$-net for the image of $\{a\in\Alg:\|[\pi(a),D_A]\|\leq 1\}$ in $A/\mathbb C1$. Then the image of
$$
\left\{\sum_{k=-N}^Na_k\lambda_k:a_k\in F,\ (k\neq 0),\ a_0\in F_0\right\}
$$
is an $\eps$-net for the image of $\mathcal S_N$ in $A/\mathbb C1$.

Now for each $x\in C_c(\Z,\Alg)$ write $x$ as a finite sum $x=\sum_{n\in\Z}x_n\lambda_n$. Then, for $\xi,\eta$ in the domain of $D_A$ and $n\in\Z$, we have
\begin{align*}
\ip{[\pi(x_n),D_A]\xi}{\eta}&=\ip{x\lambda_{-n}(D_A\otimes 1)(\xi\otimes\delta_e)}{\eta\otimes\delta_e}-\ip{x\lambda_{-n}(\xi\otimes\delta_e)}{(D_A\otimes 1)(\eta\otimes\delta_e)}\\
&=\ip{[x,D_A\otimes 1](\xi\otimes\delta_{-n})}{\eta\otimes\delta_e},
\end{align*}
and so
\begin{equation}\label{FC}
\|[\pi(x_n),D_A]\|\leq\|[x,D_A\otimes 1]\|,\quad n\in\Z.
\end{equation}
Since
$$
\ip{[x,1\otimes M_\iota](\xi\otimes\delta_{-n})}{\eta\otimes\delta_e}=-n\ip{x(\xi\otimes\delta_{-n})}{\eta\otimes\delta_e}=-n\ip{\pi(x_n)\xi}{\eta},\quad n\in\Z,\ \xi,\eta\in\Hil_A,
$$
for $n\neq 0$, we have $\|\pi(x_n)\|\leq|n|^{-1}\|[x,1\otimes M_\iota]\|\leq\|[x,1\otimes M_\iota]\|$. In conclusion, given $x=\sum_{n\in\Z}x_n\lambda_n\in\mathcal S$, we have $\sum_{k=-N}^Nx_k\lambda_k\in\mathcal S_N$ for each $N\in\mathbb N$.

Fix $\eps>0$. The `near diagonal cut-down' procedure of \cite{OR} shows that there exists $N\in\mathbb N$ such
that for any $x=\sum_{k\in\Z}x_k\lambda_k\in C_c(\Z,\Alg)$ with $\|[x,1\otimes M_\iota]\|\leq 1$, we have
$\|x-\sum_{k=-N}^Nx_k\lambda_k\|<\eps/2$.  Indeed,
\begin{equation}\label{Lip-Metric-equation}
\sum_{k=-N}^Nx_k\lambda_k=\sum_{k=-N}^N\sum_{m\in\Z}P_mxP_{m+k},
\end{equation}
where $P_m$ denotes the orthogonal projection from $\Hil\otimes\ell^2(\Z)$ onto $\Hil\otimes\mathbb C\delta_m$ and the second sum converges strongly.  The density result is then obtained by replacing positive integers by arbitrary integers in section 2 and 3 of \cite{OR}, which shows that for each $\mu>0$, there exists $N\in\mathbb N$ such that
$$
\|\sum_{|m-n|>N}P_mxP_n\|<\mu,
$$
for all $x\in C_c(\Z,\Alg)$
with $\|[x,1\otimes M_\iota]\|\leq 1$.
 As a consequence, an $\eps/2$-net for the totally bounded image of $\mathcal S_N$ in $(A\rtimes_\alpha\Z)/\mathbb C1$ then gives an $\eps$-net for the image of $\mathcal S$, completing the proof.
\end{proof}

\begin{remark}
It is worth noting what breaks down when we attempt to work with a more general group.  In \cite{OR}, Ozawa and
Rieffel show that given any finitely generated hyperbolic group $\Gamma$,  the spectral triple $(\mathbb
C\Gamma,\ell^2(\Gamma),M_{|\cdot|})$ on $C^*_r(\Gamma)$ arising from a length function $g\mapsto|g|$ on the group
gives $C^*_r(\Gamma)$ the structure of a quantum metric space. For $n\geq 0$, write $P_n$ for the orthogonal
projection on the finite dimensional subspace $\text{Span}\{\delta_g:|g|=n\}$ of $\ell^2(\Gamma)$. The near
diagonal cut-down procedure of \cite{OR} shows that, for $\eps>0$, there exists $N\in\mathbb N$ such that given
any $x\in\mathbb C\Gamma$ with $\|[x,M_{|\cdot|}]\|\leq 1$, one has
$$
\|x-\sum_{|m-n|\leq N}P_mxP_n\|<\eps.
$$
As Ozawa and Rieffel note, the operator $\sum_{|m-n|\leq N}P_mxP_n$ need not lie in $C^*_r(\Gamma)$. They proceed by splitting up each operator $x=\sum_{g\in\Gamma}\alpha_g\lambda_g\in\mathbb C\Gamma$ as
$$
x=\sum_{|m-n|\leq N}P_m(\sum_{|g|\leq K}\alpha_g\lambda_g)P_n+\sum_{|m-n|\leq N}P_m(\sum_{|g|>K}\alpha_g\lambda_g)P_n+\left(x-\sum_{|m-n|\leq N}P_mxP_n\right)
$$
for suitable $K,N\in\mathbb N$. The last term can be controlled by the near diagonal cut-down procedure
described above, while the first term only involves finitely many `Fourier-coefficients' of $x$ and so the image
of the first term in $C^*_r(\Gamma)/\mathbb C1$ over all those $x\in\mathbb C\Gamma$ with
$\|[x,M_{|\cdot|}]\|\leq 1$ is totally bounded. Ozawa and Rieffel control the second term by a Haagerup type
inequality. The absence of such an inequality when we replace the scalars $\alpha_g$ by operators $x_g$ on an
infinite dimensional Hilbert space is the obstruction preventing us from applying this strategy in the crossed
product context.  In the situation of Theorem \ref{Lip-metric}, where
$\Gamma=\Z$ is equipped with the identity function, so that the projections
$P_n$ are indexed by $\Z$ and not $\{n\in\Z:n\geq 0\}$, the near diagonal cut-down procedure coincides with the
process of approximating by finite sums of `Fourier-coefficients' and so the second term above vanishes. Another situation in which the above strategy works is the case when $\Gamma=\Z$ and we consider the triple on $C^*(\Z)$ arising from the word-length function given by the standard generating set $\{-1,1\}$,  as there we can exploit the fact that for any $n\in \bn$ there are precisely 2 elements of $\Z$ which have length $n$. We leave the details to the reader and note that  the problem for the word-lengths built on other generating sets on $\Z$, which do not have the property mentioned above, is still open.
\end{remark}

\begin{remark} \label{smoothgen}
Suppose we take $\Alg$ to be the Lipschitz algebra $\smooth$ in Theorem \ref{Lip-metric}. Under the hypotheses of that theorem, the resulting crossed-product triple $(C_c(\Z,\smooth),\Hil\oplus\Hil,D)$ induces a Lip-metric on the state space of $A\rtimes_\alpha\Z$.  The algebra $C_c(\Z,\smooth)$ is smaller than the Lipschitz algebra $\mathcal C^1(A\rtimes_\alpha\Z)$, but a similar argument to Theorem \ref{Lip-metric} can be used to show that under the same assumptions $\mathcal C^1(A\rtimes_\alpha\Z)$ also induces a Lip-metric on the state space of $A\rtimes_\alpha\Z$ and hence all (norm) dense subalgebras of $\mathcal C^1(A\rtimes_\alpha\Z)$ also induce a Lip-metric.  To see this, note that for any $x\in A\rtimes_\alpha\Z$ and $N\in\mathbb N$, (\ref{Lip-Metric-equation}) holds, where we define $x_k$ to be the `Fourier-coefficient' $x_k=E_A(x\lambda_{-k})$ and $E_A:A\rtimes_\alpha\Z\rightarrow A$ is the canonical conditional expectation, which can be defined spatially via $E_A(x)=P_0xP_0\in \pi(A)$. Thus, the cut-down procedure of Ozawa and Rieffel shows that for each $\eps>0$, there exists some $N\in\mathbb N$ with $\|x-\sum_{k=-N}^Nx_k\lambda_k\|<\eps/2$, whenever $x$ has $\|[x,1\otimes M_\iota]\|\leq 1$.  The estimate $\|[\pi(x_n),D_A]\|\leq\|[x,D_A\otimes 1]\|$ for all $n\in\Z$ is obtained in the same way as Theorem \ref{Lip-metric} and so the argument used in the proof of that theorem shows that $\mathcal C^1(A\rtimes_\alpha\Z)$ induces a Lip-metric on the state space of $A\rtimes_\alpha\Z$.
\end{remark}

\subsection{Construction of odd spectral triples and the iteration procedure}

As the last part of this section, we note that a similar procedure can also be used to produce odd spectral
triples on the crossed product $A\rtimes_{r,\alpha}\Gamma$ from an even triple on $A$, a suitable action $\alpha$
of $\Gamma$ on $A$ and a triple on $C^*_r(\Gamma)$ arising from a proper translation bounded function. In the
tensor product situation, given an even spectral triple
$$\left(\Alg,\Hil_{A,0}\oplus \Hil_{A,1},\begin{bmatrix}0&D_{A}\\D_A^*&0\end{bmatrix}\right)$$
on $A$ with the $\Z_2$-grading $\Hil_A=\Hil_{A,0}\oplus\Hil_{A,1}$, $\pi = \pi_0 \oplus \pi_1$ and an odd triple $(\Balg,\Hil_B,D_B)$ on another $C^*$-algebra $B$, we have a natural representation of the spatial tensor product $A\otimes B$ on $\Hil=(\Hil_{A,0}\otimes \Hil_B)\oplus(\Hil_{A,1}\otimes\Hil_B)$. The formalism of building Kasparov products as outlined in [Chapter 9, HR], shows that
$$
D=\begin{bmatrix}1\otimes D_B&D_{A}\otimes 1\\D_{A}^*\otimes 1&-1\otimes D_B\end{bmatrix},
$$
provides a Dirac-operator on $\Hil$ giving the odd spectral triple $(\Alg\odot\Balg,\Hil,D)$ on the spatial tensor product. This triple is non-degenerate when the original triples are non-degenerate and behaves well with respect to summability.

Let $\alpha$ be an action of a discrete group $\Gamma$ on $A$ and let $l$ be a proper translation bounded function on $\Gamma$ inducing the odd spectral triple $(\mathbb C\Gamma,\ell^2(\Gamma),M_l)$ on $C^*_r(\Gamma)$ as in Example \ref{GroupTriple}. Then, exactly in the same way as in the odd case, we have a diagonal representation of the reduced crossed product $A\rtimes_{r,\alpha}\Gamma$ on $\Hil=(\Hil_{A,0}\otimes\ell^2(\Gamma))\oplus(\Hil_{A,1}\otimes\ell^2(\Gamma))$. Provided $\alpha_g(\Alg)\subseteq \Alg$ for all $g\in\Gamma$ and the equicontinuity condition
$$
\sup_{g\in\Gamma}\|\pi_0(\alpha_g(x)) D_A - D_A \pi_1 (\alpha_g(x)) \|<\infty,\quad x\in\Alg,
$$
holds, where $\pi=\pi_0 \oplus \pi_1$ is the representation of $A$ on $\Hil_A$, the Dirac operator
$$
D=\begin{bmatrix}1\otimes M_l&D_A\otimes 1\\D_A^*\otimes 1&-1\otimes M_l\end{bmatrix},
$$
can be used to define an odd spectral triple $(C_c(\Gamma,\Alg),\Hil,D)$ on $A\rtimes_{r,\alpha}\Gamma$.  As in the odd case, this triple is non-degenerate when the triple on $A$ is non-degenerate and, when $\Gamma=\Z$ is equipped with the identity function, $D$ turns $A\rtimes_\alpha\Z$ into a quantum metric space when $D_A$ induces a quantum metric space structure on $A$.
The odd triple so obtained corresponds once again to a Kasparov product, this time of the Toeplitz extension with
$x= \left[\left(\Alg,\Hil_{A,0}\oplus \Hil_{A,1},\begin{bmatrix}0&D_{A}\\D_A^*&0\end{bmatrix}\right)\right]$ and the K-homology class of
$(C_c(\Z,\Alg),\Hil,D)$ is the image of $x \in K^0(A)$ under the boundary map in the Pimsner-Voiculescu sequence. One can also formulate and prove a counterpart of Proposition \ref{iterate} in this context. This allows us to iterate and thus provide satisfactory spectral triples on crossed products by metrically equicontinuous actions of $\Z^d$.

\begin{theorem} \label{Zd}
Let $(\Alg,\Hil_A,D_A)$ be an odd spectral triple on a unital $C^*$-algebra $A$. Assume  it satisfies the Lipschitz condition. Let $d \in \N$  and let $\alpha:\Z^d\rightarrow\textup{Aut}(A)$ be a metrically equicontinuous action of $\Z^d$ on the spectral quantum metric space $(A,L_{D_A})$. The iteration of the construction described in Theorem \ref{constr} and its even version discussed above leads to a spectral triple (odd if $d$ is even and even if $d$ is odd) on the crossed product $A\rtimes_\alpha\Z^d$, which induces a Lip-metric on the state space.
\end{theorem}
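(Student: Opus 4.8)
The plan is to realise $A\rtimes_\alpha\Z^d$ as an iterated crossed product by $\Z$ and to apply Theorem \ref{constr}, alternating with its even-to-odd counterpart discussed above, a total of $d$ times, verifying the hypotheses at every stage. Fix the standard generators $e_1,\dots,e_d$ of $\Z^d$ and write $\alpha^{(j)}$ for the $\Z$-action $n\mapsto\alpha_{ne_j}$. Since $\Z^d$ is abelian the generators commute, so one may peel off one coordinate at a time: setting $B_0=A$ and $B_k=B_{k-1}\rtimes_{\wt\alpha^{(k)}}\Z$, where $\wt\alpha^{(k)}$ is the canonical extension of $\alpha^{(k)}$ to $B_{k-1}$, one obtains a canonical identification $B_d\cong A\rtimes_\alpha\Z^d$. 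Along the way take $\Alg_0=\Alg$ and $\Alg_k=C_c(\Z,\Alg_{k-1})$, so that $\Alg_d\cong C_c(\Z^d,\Alg)$, and set $D_0=D_A$; the $k$-th step will produce a Dirac operator $D_k$ on a Hilbert space $\Hil_k$ built from $\Hil_{k-1}$ and $\ell^2(\Z)$ as in Theorem \ref{constr} (resp.\ its even version), whose parity alternates (even, odd, even, $\dots$), so that after $d$ steps the parity is even if $d$ is odd and odd if $d$ is even.

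The heart of the argument is to check that the hypotheses of the construction survive each step. Because $\alpha$ is smooth, every $\alpha_g$ leaves $\Alg$ invariant, whence $\wt\alpha^{(k+1)}$ leaves $\Alg_k$ invariant for all $k$ (by an easy induction). For the uniform boundedness condition, note that the restriction of a metrically equicontinuous $\Z^d$-action to any subgroup stays metrically equicontinuous; in particular $\alpha^{(1)}$ is metrically equicontinuous on $(A,L_{D_A})$, while the complementary action of $\langle e_2,\dots,e_d\rangle\cong\Z^{d-1}$ on $A$ commutes with $\alpha^{(1)}$, is smooth, and satisfies $\sup_{g}\|[D_A,\pi(\alpha_g(a))]\|<\infty$ for $a\in\Alg$. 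Proposition \ref{iterate} transports this last bound to the triple $(\Alg_1,\Hil_1,D_1)$ on $B_1$; iterating, using Proposition \ref{iterate} and, once the triple has become even, its even analogue, one shows by induction on $k$ that $\wt\alpha^{(k+1)}:\Z\to\Aut(B_k)$ satisfies $\sup_{n\in\Z}\|[D_k,\pi(\wt\alpha^{(k+1)}_n(b))]\|<\infty$ for all $b\in\Alg_k$, i.e.\ is metrically equicontinuous on $(B_k,L_{D_k})$ in the sense of Definition \ref{equicont}.

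With the hypotheses in hand, the $(k+1)$-st step applies Theorem \ref{constr} (or its even-to-odd version) with $\Gamma=\Z$ and the identity translation bounded function $\iota$ to produce $(\Alg_{k+1},\Hil_{k+1},D_{k+1})$ on $B_{k+1}$. Non-degeneracy persists throughout: the triple on $A$ satisfies the Lipschitz condition, hence is non-degenerate, and non-degeneracy is preserved by Theorem \ref{constr} and by its even counterpart. The Lipschitz condition persists by Theorem \ref{Lip-metric} and its even-to-odd analogue, applied according to the parity of the coefficient triple at each step; that analogue has the same proof, since the Ozawa--Rieffel near-diagonal cut-down over $\Z$ equipped with the identity function, together with the total boundedness supplied by the Lipschitz condition on the coefficient algebra, makes no reference to the $\Z_2$-grading. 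After $d$ steps one arrives at the asserted spectral triple on $A\rtimes_\alpha\Z^d$, of the stated parity, inducing a Lip-metric on its state space.

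The step I expect to be the main obstacle is the inductive verification in the second paragraph: one must be certain that metric equicontinuity persists with respect to the new, larger operator $D_k$, and not merely with respect to $D_A$, across each parity switch. Concretely $[D_k,\pi(\wt\alpha^{(k+1)}_n(b))]$ splits into a $D_A\ot 1\ot\cdots$ contribution, bounded uniformly in $n$ exactly by the hypothesis propagated through Proposition \ref{iterate}, and contributions from the multiplication operators $M_\iota$ living on the other tensor legs, which either commute with $\wt\alpha^{(k+1)}_n$ (it acts by translation in a different coordinate) or are translation bounded; disentangling the several legs and gradings cleanly, and writing out in full the even-to-odd analogues of Proposition \ref{iterate} and of Theorem \ref{Lip-metric} that are only asserted in passing above, is where the real work lies.
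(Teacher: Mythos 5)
Your proposal is correct and follows essentially the same route as the paper: realise $A\rtimes_\alpha\Z^d$ as an iterated $\Z$-crossed product, use Proposition \ref{iterate} (and its even counterpart) to propagate the uniform boundedness of the remaining coordinate actions to each new Dirac operator, and invoke Theorem \ref{Lip-metric} and its even-to-odd analogue at each step. The obstacle you flag at the end --- that equicontinuity must persist with respect to $D_k$ rather than $D_A$ --- is exactly the point the paper delegates to Proposition \ref{iterate}, so your account is in fact a more detailed version of the paper's own (very terse) argument.
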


\begin{proof}
Let $\alpha_1,..., \alpha_d$ denote the coordinate $\Z$-actions of $\alpha$, and note that each $\alpha_i$ is metrically equicontinuous. The isomorphism
\[A\rtimes_\alpha\Z^d \approx \left( \cdots ((A \rtimes_{\alpha_1}\Z)\rtimes_{\alpha_2} \Z)\cdots \right)\rtimes_{\alpha_d} \Z\]
is well-known and easy to check.
Proposition \ref{iterate} and its counterpart for even triples imply via induction that the assumptions on the action required for the spectral triple construction in Theorem \ref{Lip-metric} are satisfied at each iterative step;
hence the result is a consequence of Theorem \ref{Lip-metric} and the discussion in the last two paragraphs.
\end{proof}

Note that if $d>1$ then the spectral triple on $A\rtimes_\alpha\Z^d$ produced iteratively as described in the last theorem  is not in general a spectral triple arising from a length function on $\Z^d$ via the construction from Theorem \ref{constr}.

\begin{remark} We are grateful to the referee for pointing out that these iterated spectral triples can be viewed as arising via proper translation bounded matrix-valued functions. To describe this we need first to introduce some further terminology.  Given a discrete group $\Gamma$ and $n\in\mathbb N$, consider a function $l:\Gamma\rightarrow M_{2n}^{sa}$ taking values in the self-adjoint $2n\times 2n$ matrices. We say that $l$ is \emph{translation bounded} if each translation function $l_g:\Gamma\rightarrow M_{2n}$ given by $l_g(x)=l(x)-l(g^{-1}x)$  ($g, x \in \Gamma$) is bounded; we say it is  \emph{proper} if $l(g)=0$ if and only if $g$ is the identity element of $\Gamma$, the set of all the eigenvalues of the matrices $\{l(g):g\in\Gamma\}$ is a discrete subset of $\mathbb R$ and no real number is the eigenvalue of infinitely many of the matrices $\{l(g):g\in\Gamma\}$ (compare with the definition given in Example \ref{GroupTriple}).

Note first that given such a translation bounded and proper matrix-valued function $l$, one obtains a spectral triple on $C^*_r(\Gamma)$ in a similar fashion to that described in Example \ref{GroupTriple}. Indeed, consider the representation $\lambda_{n}$ of $C^*_r(\Gamma)$ on $\ell^2(\Gamma)\otimes \mathbb C^{2n}$ given by amplifying the left regular representation, and define $M_l$ to be the densely defined operator of multiplication by $l(\cdot)$, i.e. $M_l(\delta_g\otimes\eta)=\delta_g\otimes l(g)\eta$ for $g\in\Gamma$ and $\eta\in \mathbb C^{2n}$. Then  $(\mathbb C\Gamma,\ell^2(\Gamma)\otimes\mathbb C^{2n},M_l)$ on $C^*_r(\Gamma)$ forms a spectral triple.

Now we describe how to view triples constructed on $A\rtimes_\alpha\Z^d$ in Theorem \ref{Zd} as arising from proper, translation bounded  matrix-valued functions.
To that end we  recursively define such "length" functions on $\Z^d$. Let $l^{(1)}:\Z\rightarrow M_2^{sa}$ be given by $l^{(1)}(n)=\begin{bmatrix}0&-in\\in&0\end{bmatrix}$. For $d>1$, define $l^{(d)}:\Z^d\rightarrow M_{2^{\lceil d/2\rceil}}$ by
$$
l^{(d)}(n_1,\cdots,n_d)=\begin{cases}l^{(d-1)}(n_1,\cdots,n_{d-1})+\begin{bmatrix}n_d1_{2^{d/2-1}}&0\\0&-n_d1_{2^{d/2-1}}\end{bmatrix},&d\text{ even};\\\begin{bmatrix}0&l^{(d-1)}(n_1,\cdots,n_{d-1})-in_d1_{2^{(d-1)/2}}\\{}l^{(d-1)}(n_1,\cdots,n_{d-1})^*+in_d1_{2^{(d-1)/2}}&0\end{bmatrix},&d\text{ odd},\end{cases}
$$
so that for example $l^{(2)}(n_1,n_2) = \begin{bmatrix}n_2&-in_1\\in_1&-n_2\end{bmatrix}$.
These are proper translation bounded functions on $\Z^d$. Further, given an odd spectral triple $(\mathcal A,\Hil,D_A)$ on a unital $C^*$-algebra $A$ satisfying the Lipschitz condition and a metrically equicontinuous action $\alpha:\Z^d\rightarrow\Aut(A)$, set $r=\lceil d/2\rceil$, and write
$$
\Hil=\Hil_A\otimes\ell^2(\Z^d)\otimes\mathbb C^{2^r}\cong(\Hil_A\otimes \ell^2(\Z^d)\otimes \mathbb C^{2^{r-1}})\oplus(\Hil_A\otimes \ell^2(\Z^d)\otimes \mathbb C^{2^{r-1}}).$$ Then the Dirac operator on $\Hil$ inducing a spectral triple on $A\rtimes_\alpha\Z^d$ arising from the iteration procedure of Theorem \ref{Zd} can be written in the form
$$
D=\begin{bmatrix}0&D_A\otimes 1_{\ell^2(\Z^d)\otimes \mathbb C^{2^{r-1}}}\\D_A\otimes 1_{\ell^2(\Z^d)\otimes \mathbb C^{2^{r-1}}}&0\end{bmatrix}+1_{\Hil_A}\otimes M_l,
$$
so arises in a similar fashion to the construction of spectral triples on crossed products by $\Z$, with matrix valued translation bounded functions $l^{(d)}$ in place of the scalar valued functions.

It would be interesting to find matrix valued proper translation bounded functions on other groups $\Gamma$, especially those inducing non-trivial elements of $K^1(C^*_r(\Gamma))$, which give rise to spectral triples on crossed products $A\rtimes_{r,\alpha}\Gamma$ satisfying the Lipschitz condition when $\alpha$ is a metrically equicontinuous action of $\Gamma$ on a spectral quantum metric space $A$.

\end{remark}

\section{Examples}

In this section we give some examples of metrically equicontinuous actions on $C^*$-algebras with spectral triples and hence obtain spectral triples on the resulting crossed product algebras. First we specialise Proposition \ref{equicontthm} to the commutative case, then examine the odometer action on the Cantor set to obtain spectral triples on the Bunce-Deddens algebras. We then consider actions of $\Z$ on non-commutative AF-algebras. Finally we return to the commutative situation and discuss generalized odometer actions of residually finite groups on the Cantor set. These give rise to the generalized Bunce-Deddens algebras of \cite{Orf}.

\subsection{Equicontinuous actions on compact spaces}

\begin{proposition} \label{equicontcomm}
Let $X$ be a compact metrizable space, $\Gamma$ a countable discrete group and $\alpha$ an action of $\Gamma$ on $X$. Write $\tilde{\alpha}$ for the action of $\Gamma$ on $C(X)$ induced by $\alpha$.  Suppose that $(\Alg, \Hil, D)$ is a spectral triple on $A=C(X)$ with faithful representation $\pi:C(X)\to B(\Hil)$ satisfying the Lipschitz condition and $\tilde{\alpha}$ acts smoothly with respect to this triple. Consider the following statements:
\begin{enumerate}[(i)]
\item $\tilde{\alpha}$ is metrically equicontinuous in the sense of Definition \ref{equicont}, i.e. for each $f\in \Alg$ there exists $M>0$ such that for each $g\in \Gamma$
\begin{equation} \label{HSWZ} \|[D,\pi(\tilde{\alpha}_{g}(f))]\|\leq M;\end{equation}
\item $\alpha$ is metrically equicontinuous, i.e.\ there exists an equivalent metric on $X$ for which all $\alpha_{g}$ ($g \in \Gamma$) are isometries.
\end{enumerate}
Then (ii)$\Longrightarrow$(i) in general, and (i)$\Longrightarrow$(ii) in the case that $\Alg=\smooth$.
\end{proposition}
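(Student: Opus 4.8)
The plan is to derive both implications from Proposition~\ref{equicontthm}, applied to the compact quantum metric space $(C(X),L_D)$ and the smooth action $\tilde\alpha$, the only additional ingredients being the standard description of the state space of a commutative $C^*$-algebra together with one analytic fact about commutative spectral triples. First I would set up the dictionary between $X$ and $S(C(X))$: the pure states of $C(X)$ are precisely the point evaluations $\delta_x$, $x\in X$; this copy of $X$ is invariant under the induced action $\widehat{\tilde\alpha}$ on $S(C(X))$, which acts on it by $\delta_x\mapsto\delta_{\alpha_g(x)}$, and its relative weak-$^*$ topology is the original topology of $X$. Since the triple satisfies the Lipschitz condition, $d_{L_D}$ is compatible with the weak-$^*$ topology on $S(C(X))$, so $\rho:=d_{L_D}|_X$ is a metric on $X$ inducing its topology — the Connes metric of the triple — and statement~(ii) is to be read with respect to $\rho$. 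The analytic fact I would record at the outset is that on $\Alg$ one has $L_D=\mathrm{Lip}_\rho$, the Lipschitz seminorm of $\rho$ (equivalently $d_{L_D}=W_1^{\rho}$ on $S(C(X))$): the bound $\mathrm{Lip}_\rho\le L_D$ is Connes' distance formula, and the reverse holds because $[D,\pi(\cdot)]$ is a first-order object over the commutative algebra, hence the Lipschitz seminorm of the measurable metric it determines in Weaver's sense, whose intrinsic metric is exactly $\rho$, and a metric always dominates its intrinsic metric.

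For (ii)$\Rightarrow$(i), which I would prove for an arbitrary $\Alg$, suppose $d'$ is a $\Gamma$-invariant metric on $X$ with $\tfrac1C\rho\le d'\le C\rho$. Every $f\in\Alg$ is $\rho$-Lipschitz, hence $d'$-Lipschitz, and since each $\alpha_g$ is a $d'$-isometry the seminorm $\mathrm{Lip}_{d'}$ on $\Alg$ is $\tilde\alpha$-invariant; moreover $\tfrac1C\,\mathrm{Lip}_\rho\le\mathrm{Lip}_{d'}\le C\,\mathrm{Lip}_\rho$ by the equivalence of $d'$ and $\rho$. Using $L_D=\mathrm{Lip}_\rho$, the seminorm $L':=C\,\mathrm{Lip}_{d'}$ is then a Lipschitz seminorm on $\Alg$ for which $\tilde\alpha$ acts isometrically and which satisfies $L_D\le L'\le C^2 L_D$; consequently $L_D(\tilde\alpha_g f)\le L'(\tilde\alpha_g f)=L'(f)\le C^2 L_D(f)$ for all $g$, which is exactly metric equicontinuity of $\tilde\alpha$ on $(C(X),L_D)$ (this is the content of the implication (ii)$\Rightarrow$(i) of Proposition~\ref{equicontthm}, which needs no closedness hypothesis). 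Unravelling the definitions gives $\sup_{g}\|[D,\pi(\tilde\alpha_g f)]\|\le C^2\|[D,\pi(f)]\|<\infty$ for every $f\in\Alg$, that is~(i).

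For (i)$\Rightarrow$(ii) in the case $\Alg=\smooth$, I would instead use that $L_D$ is then closed, so the implication (i)$\Rightarrow$(iii) of Proposition~\ref{equicontthm} applies: metric equicontinuity of $\tilde\alpha$ on $(C(X),L_D)$ yields a $\widehat{\tilde\alpha}$-invariant metric $\tilde d$ on $S(C(X))$ that is Lipschitz equivalent to $d_{L_D}$. Restricting $\tilde d$ to the pure-state copy of $X$ inside $S(C(X))$ then produces a metric on $X$ which induces its topology, which is $\Gamma$-invariant (as $\widehat{\tilde\alpha}$ permutes the $\delta_x$ via $\alpha$), and which is Lipschitz equivalent to $d_{L_D}|_X=\rho$ — precisely the equivalent invariant metric required by~(ii). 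This half needs nothing beyond the observation that restriction of metrics from $S(C(X))$ to its pure states preserves both $\Gamma$-invariance and Lipschitz equivalence.

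The step I expect to be the main obstacle is the analytic identification $L_D=\mathrm{Lip}_\rho$ on $\Alg$ used above (and, when $\Alg\neq\smooth$ so that $L_D$ need not be closed, obtaining a clean uniform comparison of $L_D$ with $\mathrm{Lip}_\rho$): it is exactly what converts the purely metric hypothesis~(ii) on $(X,\rho)$ into control of the operator norms $\|[D,\pi(\tilde\alpha_g f)]\|$, since metric equicontinuity of $\alpha$ on $X$ only visibly controls Lipschitz constants of functions. The converse implication is comparatively soft, losing no information in the passage to pure states. I would quote the needed commutative fact rather than reprove it, pointing to the first-order nature of bounded commutators over a commutative $C^*$-algebra, or equivalently to Weaver's correspondence between bounded derivations of commutative von Neumann algebras and measurable metrics.
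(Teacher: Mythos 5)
Your overall route coincides with the paper's: both reduce the statement to Proposition~\ref{equicontthm} via the dictionary between metrics on $X$ and metrics on $S(C(X))$, and your treatment of (i)$\Longrightarrow$(ii) (apply (i)$\Longrightarrow$(ii)$\Longrightarrow$(iii) of that proposition using closedness of $L_D$ for $\Alg=\smooth$, then restrict the invariant metric on $S(C(X))$ to the point evaluations) is exactly what the paper does and is correct.

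The problem is the ``analytic fact'' $L_D=\mathrm{Lip}_\rho$ on which your proof of (ii)$\Longrightarrow$(i) entirely rests. Only the inequality $\mathrm{Lip}_\rho\le L_D$ is true in general (Connes' distance formula); the reverse inequality fails, and neither the ``first-order'' nature of $[D,\pi(\cdot)]$ nor Weaver's derivation--metric correspondence delivers it, because $[D,\pi(f)]$ lives in $\mathbb B(\Hil)$, where the left and right $C(X)$-actions differ and the norm is a genuinely noncommutative operator norm rather than a supremum of local difference quotients. A concrete counterexample within the hypotheses of the proposition: take $X=\{1,2,3\}$, $\pi$ the diagonal representation of $\C^3$ on $\C^3$, and $D$ the self-adjoint matrix with $0$ on the diagonal and $1$ elsewhere. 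The Connes metric $\rho$ is the symmetric one (all three distances equal), so $\mathrm{Lip}_\rho(f)$ depends only on $\max_{i\ne j}|f_i-f_j|$; but $\|[D,f]\|=\sqrt2$ for $f=(0,1,1)$ and $\|[D,f]\|=\sqrt{3/2}$ for $f=(0,1,\tfrac12)$, both with $\max_{i\ne j}|f_i-f_j|=1$. Hence $L_D$ is not the Lipschitz seminorm of \emph{any} metric on $X$, let alone of $\rho$, and your chain $L_D(\tilde\alpha_g f)\le L'(\tilde\alpha_g f)=L'(f)\le C^2L_D(f)$ collapses at its first link. To your credit you flagged this as the main obstacle, and it is genuinely the delicate point: the paper's own argument for (ii)$\Longrightarrow$(i) proceeds slightly differently, by lifting the invariant metric $d_2$ on $X$ to the Monge--Kantorovich metric on $S(C(X))$ and invoking (iii)$\Longrightarrow$(i) of Proposition~\ref{equicontthm}, which requires comparing $d_{L_D}$ with the Monge--Kantorovich metric of $\rho=d_{L_D}|_X$ --- the dual form of the very same comparison. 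So the crux is correctly located, but the justification you propose (quoting a general commutative fact) does not close it; some additional input beyond the stated hypotheses, or a different argument for this implication, is needed.
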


\begin{proof}
The result is almost a special case of Proposition \ref{equicontthm}, noting that the condition $\Alg=\smooth$ ensures that the Lipschitz seminorm induced by $D$ is closed. Indeed we only need to show that if $d_1,d_2$ are two equivalent metrics on a compact space $X$, with equivalence constant $C$ (so that $C^{-1} d_1 \leq d_2 \leq C d_1$)  then the corresponding metrics $\rho_1,\rho_2$ on the state space of $C(X)$ are equivalent with the same equivalence constants (the converse is immediate).

Let us recall how the metric on the state space is constructed: we first define
\[ L_i(f) = \inf \, \{M:\forall_{x,y \in X} \; |f(x)-f(y)| \leq M d_i(x,y) \}, \;\; f \in C(X)\]
and then
\[ \rho_i (\phi, \psi) = \sup \{ |\phi(f)-\psi(f)|: f = f^* \in C(X),\ L_i(f) \leq 1\}.\]
Hence if $d_1$ and $d_2$ are equivalent metrics on $X$ with the equivalence constant $C$, then we have for any $f = f^*\in C(X)$
\[ C^{-1} L_2(f) \leq L_1(f) \leq C L_2(f),\]
and as $L$ is homogeneous (i.e.\ $L(\lambda f)= |\lambda| L(f)$, $f \in C(X), \lambda \in \bc$), for any $\phi, \psi$ in the state space of $C(X)$, we have
\[ C^{-1} \rho_1(\phi, \psi) \leq \rho_2(\phi, \psi) \leq C \rho_1(\phi, \psi).\]
Finally, it is easy to see that a metric on $X$ is invariant under $\alpha$ iff its extension to the state space of $C(X)$ is invariant under $\alpha$. This concludes the proof.
\end{proof}

It follows that our construction of  spectral triples provided by  Theorem \ref{Lip-metric} on $C(X)\rtimes_{\tilde{\alpha}}\Z$ can be applied only for actions $\alpha$ on $X$ which are at least equicontinuous.
In practice we will usually aim  to construct an isometric spectral triple. The motivating example is the construction of the spectral triple on irrational rotation algebras $A_{\Theta}$.  Rotations of the circle are certainly equicontinuous and, when $C(\T)$ is equipped with the usual spectral triple arising from the differentiation operator (which certainly satisfies the Lipschitz condition), a rotation action is smooth in the sense of Definition \ref{equicont}.  Thus Section \ref{sec2} gives a spectral triple on $C(\T)\rtimes_{\tilde{\alpha}} \Z\cong A_\Theta$, satisfying the Lipschitz condition; this procedure reproduces the usual Dirac operator on $A_{\Theta}$ (\cite{had}).

A general minimal equicontinuous action of $\Z$ on a compact metric space $X$ arises from rotation action on a compact abelian group (\cite[Theorem 2.42]{Kur}), i.e. $X$ can be equipped with the structure of an abelian group (so that in particular the addition is continuous) and the action is of the form $x\mapsto x+a$ for some $a\in X$.  In particular, when $X$ is a Cantor set, any minimal equicontinuous action is conjugate to an odometer action (\cite[Theorem 4.4]{Kur}).  Thus there are strong restrictions on what simple crossed products of commutative $C^*$-algebras admit crossed product triples in the fashion of Section \ref{sec2}.

\subsection{Bunce-Deddens Algebras}

Given a sequence $(m_n)_{n=1}^\infty$ of natural numbers with $m_n\geq 2$ for each $n$, consider the
Cantor set $X=\prod_{n=1}^\infty\Z_{m_n}$, where $\Z_{m_n}$ denotes the discrete group of integers with addition
modulo $m_n$.  The odometer action on $X$ is the homeomorphism $T:X\rightarrow X$ given by formal addition of
$(1,0,0,\cdots)$ with carry to the right.  This process continues infinitely so that
$T(m_1-1,m_2-1,m_3-1,\cdots)=(0,0,\cdots)$. With respect to the natural metric on $X$ given by the formula
$d(x,y)=\frac{1}{n}$, where $n$ is the number of the first coordinate at which $x$ and $y$ differ (and of course $d(x,y)=0$ if $x=y$), $T$ is an isometry and so the corresponding action of $\Z$ is metrically equicontinuous. The resulting crossed product $C(X)\rtimes_T\Z$ is isomorphic to the
Bunce-Deddens algebra $B_m$ with supernatural number $m=\prod_{n=1}^\infty m_n$ (\cite{da}, Chapter 8).

Let $A$ be a separable unital AF-algebra with a specified AF-filtration $(A_m)_{m=0}^\infty$ such that $A_0=\mathbb
C1_A$.  Given a faithful state $\phi$ on $A$,  let $\pi$ be the GNS-representation of $A$ on $\Hil=L^2(A,\phi)$
with cyclic vector $\xi$.  Define a sequence of pairwise orthogonal finite dimensional subspaces
$(E_m)_{m=0}^\infty$ of $\Hil$ corresponding to the filtration $(A_m)_{m=0}^\infty$ by $E_0=\pi(A_0)\xi=\mathbb
C\xi$, $E_m=\pi(A_m)\xi\ominus \pi(A_{m-1})\xi$ for $m\in \N$. Write $Q_m$ for the orthogonal projection from
$\Hil$ onto $E_m$. Given a sequence $(\lambda_m)_{m=0}^\infty$ with $|\lambda_m|\rightarrow\infty$ and
$\lambda_0=0$ one can define a Dirac operator $D=\sum_{m=0}^\infty \lambda_mQ_m$ on $\Hil$.
In \cite[Theorem2.1]{CI}, Christensen and Ivan show that given any separable unital AF-algebra $A$ and $\varepsilon>0$,
one can choose the sequence $(\lambda_m)$ to produce an $\varepsilon$-summable odd spectral triple $(\smooth, \pi, D)$
yielding a Lip-metric on the state space of $A$ with the Lipschitz algebra $\smooth$ containing $ \bigcup_{n=0}^\infty A_n$.  It is natural to take $\Alg=\bigcup_{n=0}^\infty A_n$ in this construction.  An automorphism $\theta:A\rightarrow A$ is smooth if $\theta(\Alg)=\Alg$, i.e. for each $n\in\N$, there exists $m\in\N$ with $\theta(A_n)\subseteq A_m$.

For the odometer action of $\Z$ on the Cantor set described above, write the Cantor set as the projective limit
\begin{equation}\label{bd.1}
\xymatrix{{\Z_{m_1}}&{\Z_{m_1}\times\Z_{m_2}}\ar[l]&{\Z_{m_1}\times\Z_{m_2}\times\Z_{m_3}}\ar[l]&{\cdots}\ar[l]&{\prod_{n=1}^\infty\Z_{m_n}=X}\ar[l]}.
\end{equation}
The odometer action arises as the limit of actions $T_n$ on $\prod_{r=1}^n\Z_{m_r}$, where $T_n$ is addition of $(1,0,\cdots,0)$
with carry to the right and $\phi_n(m_1-1,m_2-1,\cdots,m_n-1)=(0,0,\cdots,0)$. That is, by the identification
\begin{equation}\label{bd.2}
\prod_{r=1}^n\Z_{m_r}\stackrel{\cong}{\rightarrow} \Z_{m_1\cdots m_n},\quad (x_1,\cdots,x_n)\mapsto x_1+x_2m_1+x_3m_1m_2+\cdots+x_nm_1\cdots m_{n-1},
\end{equation}
$T_n$ is the action of addition of $1$ on $\Z_{m_1\cdots m_n}$ (which can be viewed simply as a cyclic permutation of the finite
set $\Z_{m_1\cdots m_n}$). Dualising, we obtain a natural filtration $(A_n)_{n=1}^\infty$ of $C(X)$ (set $A_0=\mathbb C1_X$) with respect to which the dual automorphism $\theta_T$ is smooth, i.e. $\theta_T(A_n)\subset\bigcup_{m=0}^\infty A_m$ for all $m$.

We can further see that $\theta_T$ satisfies the non-commutative equicontinuity property of Definition \ref{equicont} directly from the construction above. More than being smooth, $\theta_T$ satisfies $\theta_T(A_n)=A_n$ for all $n\in\mathbb N$. Since elements of $A_n$ commute with the projections $Q_m$ for $m>n$, it
follows that $\sup_{r\in\Z}\|[D,\pi({\theta_T}^r(x))]\|<\infty$ for $x\in\bigcup_{n=0}^\infty A_n$, so $\theta_T$ is metrically equicontinuous.  Provided we construct the Christensen-Ivan triple with respect to an invariant faithful state $\phi$ (i.e. $\phi\circ\theta_T=\phi$), such as the product of the uniform probability measures on each $\Z_{m_n}$, $\theta_T$ is isometric.  The invariance ensures that $\theta_T$ is spatially implemented by some unitary $U$ on $L^2(A,\phi)$, i.e.
$\pi(\theta_T(x))=U\pi(x)U^*$ for all $x\in A$.
Since $\theta_T(A_n)=A_n$ for all $n$, it follows that $U$ commutes with each of the
projections $Q_n$ and hence with the Dirac operator $D$.  Thus
\begin{equation}
\|[D,\pi(\theta_T(x))]\|=\|[D,U\pi(x)U^*]\|=\|[D,\pi(x)]\|,\quad \forall x\in\mathcal C^1(A).
\end{equation}
Thus, for every odometer action defined formally as addition by 1 on the Cantor set $X=\prod_{n=1}^\infty\Z_{m_n}$, there exists a Christensen-Ivan triple $(\bigcup_{n=0}^\infty A_n,\Hil,D)$, where  $\Hil$ is the GNS space of an faithful invariant state on $C(X)$ and satisfying the Lipschitz condition which induces an even spectral triple on the Bunce-Deddens algebra $B_m$ also satisfying the Lipschitz condition. We should note that the choice of Christensen-Ivan triple may depend quite strongly on the choice of odometer. For each $p>1$, one can construct such a triple which is $p$-summable. This critical value corresponds to the nuclear dimension of the
Bunce-Deddens algebra: as here $A_n \rtimes_{T_n}\Z = C(\prod_{i=1}^n \Z_{m_i}) \rtimes_{T_n}\Z \approx M_{\prod_{i=1}^n m_i}
\otimes C(\T)$, the Bunce-Deddens algebra is easily seen to be an A$\mathbb T$-algebra with nuclear dimension $1$ (\cite{nucdim}).

As all minimal equicontinuous actions of $\Z$ on the Cantor set $X$ are conjugate to odometer actions (see \cite[Theorem 4.4]{Kur} and Proposition \ref{Ellis} below), the Bunce-Deddens algebras are the only simple crossed products $C(X)\rtimes_\alpha\Z$ for which the procedure of Section \ref{sec2} gives us a spectral triple. It would be interesting to learn to what extent (if at all) it is possible to construct spectral triples on crossed products arising from other Cantor minimal systems (such as interval exchange transformations for which the corresponding crossed products were studied for example in \cite{Putnam}) using a different approach. Another possible extension, with a $\Z$-action replaced by an action of an arbitrary countable discrete group, is described in the last part of this section.

\subsection{Actions on AF-algebras}

The ideas above work also in the non-commutative situation. Let $\theta$ be an automorphism of a unital AF-algebra $A$. Say that $\theta$ \emph{fixes} an AF-filtration $(A_n)_{n=0}^\infty$ for $A$ if $\theta(A_n)=A_n$ for all $n\in \N$.  Just as above, this implies that $\theta$ is metrically equicontinuous with respect to any Christensen-Ivan triple constructed for $(A_n)_{n=0}^\infty$, and thus gives a spectral quantum metric space structure on the crossed product algebra $A\rtimes_\theta\Z$. Further if the Christensen-Ivan triple is constructed from an invariant faithful state, then $\theta$ is isometric with respect to this triple.  Since $\theta$ fixes the filtration $(A_n)_{n=0}^\infty$, the crossed product $A\rtimes_\theta\Z$ arises in a natural way as an inductive limit of $(A_n\rtimes_{\theta_n}\Z)_{n=1}^{\infty}$, where $\theta_n
= \theta|_{A_n}$. This inductive limit is compatible with the construction of spectral triples described in Section 2. Indeed, in
the notation used above we can see easily that $( \pi|_{A_n}, \bigoplus_{k\leq n}E_k, \xi)$ gives the GNS construction for the pair $(A_n, \phi_n)$,
where $\phi_n=\phi|_{A_n}$, and the Christensen-Ivan type Dirac operator for the (finite) filtration  $(A_k)_{k=0}^n$ in this
picture coincides with the restriction of the original $D$ to $\bigoplus_{k\leq n}E_k$. We record this as a proposition.

\begin{proposition} \label{indlimit}
Let $A$ be a unital separable AF-algebra with a fixed $AF$-filtration $(A_m)_{m=0}^\infty$ (with $A_0=
\mathbb{C} 1_A$), let $(\mathcal{A},  \pi, D)$ be a Christensen-Ivan type triple on $A$ and let $\theta \in
\textup{Aut}(A)$ fix the filtration. For each $n \in \mathbb{N}$ write $\theta_n=\theta|_{A_n}$ and denote the
corresponding Christensen-Ivan triple on $A_n=\bigcup_{m=0}^n A_m$ by $(A_n,  \pi_n, D_n)$. Let $(
\widetilde{A},\sigma , \widetilde{D})$ and  $( \widetilde{A}_n, \sigma_n, \widetilde{D}_n )$ denote the spectral triples
(respectively on $ A\rtimes_\theta\Z$ and on $A_n\rtimes_{\theta_n}\Z$) constructed according to Theorem
\ref{constr} of Section 2, with the respective Hilbert spaces denoted $\widetilde{\Hil}$ and $\widetilde{\Hil}_n$. Then there exists an increasing  sequence of  orthogonal projections $(P_n)_{n=0}^{\infty}$
in $B(\widetilde{\Hil})$, strongly convergent to $1_{B(\widetilde{\Hil})}$, such that the triple $( \widetilde{A}_n,
\sigma_n, \widetilde{D}_n)$ is unitarily equivalent to $(\widetilde{A}_n,
, P_n\sigma|_{A_n \rtimes_{\theta_n}\Z}P_n, P_n\widetilde{D}P_n)$.
\end{proposition}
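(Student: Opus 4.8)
\end{proposition}

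\begin{proof}[Proof sketch]
The plan is to exhibit the projections $P_n$ and the implementing unitaries explicitly, reducing everything to the GNS picture recalled just before the statement, and then to verify the three required compatibilities --- of Hilbert spaces, of Dirac operators, and of representations --- one at a time. There are no estimates involved: the whole content is that every ingredient on the coefficient-algebra side reduces along the increasing net of projections onto $\bigoplus_{k\le n}E_k$, and that this reduction passes through the crossed-product construction of Theorem~\ref{constr} unchanged.

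First I would fix notation on $A$. For $n\ge 0$ let $P_n^{(0)}=\sum_{k=0}^nQ_k\in B(\Hil)$ be the orthogonal projection onto $\bigoplus_{k\le n}E_k=\overline{\pi(A_n)\xi}$; since $\bigoplus_{k\ge0}E_k=\Hil$ the sequence $(P_n^{(0)})_n$ is increasing and converges strongly to $1$. Because $\pi(b)$ and $\pi(b)^{*}=\pi(b^{*})$ leave $\overline{\pi(A_n)\xi}$ invariant for $b\in A_n$, each $\pi(b)$ with $b\in A_n$ commutes with $P_n^{(0)}$; writing $V_n:\Hil_n\to\Hil$ for the GNS identification $\pi_n(b)\xi\mapsto\pi(b)\xi$ (an isometry with $V_nV_n^{*}=P_n^{(0)}$ and $V_n^{*}V_n=1_{\Hil_n}$) one then has $V_n^{*}\pi(b)V_n=\pi_n(b)$ for $b\in A_n$. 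Likewise $D$ commutes with every $Q_k$, hence with $P_n^{(0)}$, it reduces to $P_n^{(0)}\Hil$, and $V_n^{*}DV_n=D_n$. Finally, as $\theta$ fixes the filtration we have $\theta^{m}(A_n)=A_n$ for all $m\in\Z$, so all of the above holds verbatim with $\pi(b)$ replaced by $\pi(\theta^{m}(b))$, $b\in A_n$, $m\in\Z$.

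Next I would assemble the crossed-product data. Recall that, via Theorem~\ref{constr}, $\widetilde{\Hil}=(\Hil\otimes\ell^2(\Z))^{\oplus2}$, $\widetilde{\Hil}_n=(\Hil_n\otimes\ell^2(\Z))^{\oplus2}$ and $\widetilde{A}_n=C_c(\Z,A_n)\subseteq C_c(\Z,A)$. Put $W_n=(V_n\otimes 1_{\ell^2(\Z)})^{\oplus 2}:\widetilde{\Hil}_n\to\widetilde{\Hil}$ and $P_n=W_nW_n^{*}=(P_n^{(0)}\otimes 1_{\ell^2(\Z)})^{\oplus 2}\in B(\widetilde{\Hil})$; then $(P_n)_n$ is increasing with strong limit $1_{B(\widetilde{\Hil})}$, and $W_n$ is a unitary of $\widetilde{\Hil}_n$ onto $P_n\widetilde{\Hil}$. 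For the Dirac operators, write $\widetilde{D}$ and $\widetilde{D}_n$ in the form (\ref{DefDirac2}); since $\widetilde{D}$ is built off-diagonally from $D\otimes 1$ and $1\otimes M_\iota$, each commuting with $P_n^{(0)}\otimes 1$ on its natural core, $P_n$ commutes with $\widetilde{D}$, the compression $P_n\widetilde{D}P_n$ is a genuine self-adjoint operator on $P_n\widetilde{\Hil}$ with compact resolvent, and $V_n^{*}DV_n=D_n$ gives $W_n\widetilde{D}_nW_n^{*}=P_n\widetilde{D}P_n$. For the representations, $\sigma$ is the doubled integrated form of the covariant pair of (\ref{DefDirac1}), with $\widetilde{\pi}(a)$ acting diagonally over the $\delta_g$ with $g$-block $\pi(\theta^{-g}(a))$, and $\sigma_n$ is the analogous object for $(A_n,\Z,\theta_n)$; as $\theta$ fixes the filtration, for $a\in A_n$ every block $\pi(\theta^{-g}(a))$ lies in $\pi(A_n)$ and so commutes with $P_n^{(0)}$, while $\lambda_h$ commutes with $P_n^{(0)}\otimes1$, so $\sigma(x)$ commutes with $P_n$ for all $x\in C_c(\Z,A_n)$ and hence, by continuity, for all $x\in A_n\rtimes_{\theta_n}\Z$. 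Unwinding $V_n$, which intertwines $\pi(b)|_{\bigoplus_{k\le n}E_k}$ with $\pi_n(b)$ for $b\in A_n$, then yields $W_n\sigma_n(x)W_n^{*}=P_n\sigma(x)P_n$ for $x\in C_c(\Z,A_n)$. Together these three facts say precisely that $W_n$ implements the asserted unitary equivalence.

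I expect the only genuinely delicate point to be the bookkeeping with the unbounded operators: one must check that $\widetilde{D}$ really reduces along $P_n$ --- that $P_n$ maps the domain of $\widetilde{D}$ into itself and commutes with $\widetilde{D}$ there, and that $W_n$ carries the domain of $\widetilde{D}_n$ onto the domain of $P_n\widetilde{D}P_n$ --- so that $P_n\widetilde{D}P_n$ is a bona fide Dirac operator rather than a merely formal compression. This is clean here because $\widetilde{D}$ is assembled from $D\otimes 1$ and $1\otimes M_\iota$, each commuting with $P_n^{(0)}\otimes 1$ on a common core, whence $P_n$ commutes with all spectral projections of $\widetilde{D}$; once this is granted, every remaining step is a routine verification using the explicit formulas (\ref{DefDirac1}) and (\ref{DefDirac2}).
\end{proof}
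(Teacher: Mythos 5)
Your proposal is correct and follows essentially the same route as the paper, which proves the proposition by observing (in the paragraph preceding the statement) that $(\pi|_{A_n},\bigoplus_{k\le n}E_k,\xi)$ is the GNS construction for $(A_n,\phi|_{A_n})$ and that the Christensen--Ivan Dirac operator for the finite filtration is the restriction of $D$ to $\bigoplus_{k\le n}E_k$, then tensoring with $\ell^2(\Z)$ and doubling. Your writeup simply makes explicit the projections $P_n=(P_n^{(0)}\otimes 1)^{\oplus 2}$ and the intertwining unitaries $W_n$ that the paper leaves as an easy verification.
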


\begin{remark}\label{weaklyfixed}
In the situation above, if the action $\theta$ does not fix the filtration $(A_n)_{n=0}^\infty$, but has the
property that for each $n\in\mathbb N$, there exists $m\in\mathbb N$, with $\theta^r(A_n)\subseteq A_m$ for all
$r\in\mathbb Z$, then $\sup_{r\in\Z}\|[D,\pi(\theta^r(x))]\|<\infty$ for all $x\in\bigcup_{n=0}^\infty A_n$,
allowing us to use Section 2 to produce a triple on the crossed product just as above. In this instance, however,
it is preferable to adjust the filtration by defining $B_n=C^*(\bigcup_{r\in\Z}\theta^r(A_n))$.  Each $B_n$ is
contained in some $A_m$, so it is finite-dimensional and moreover $\bigcup_{n=0}^\infty B_n=\bigcup_{n=0}^\infty A_n$. This new AF-filtration
is fixed by $\theta$. This procedure, at least when the Christensen-Ivan triple is constructed from a faithful invariant state,  gives another method of obtaining a new spectral triple for which $\theta$ is isometric (without passing to a larger Hilbert space as in Corollary \ref{spectralqms}).
\end{remark}

Product type automorphisms of UHF-algebras  provide the most natural non-commutative example of this phenomenon.   We begin with a general observation.

\begin{lemma}
Let $A,B$ be unital $C^*$-algebras, and let $\alpha \in \Aut(A \ot B)$ satisfy $\alpha(A\ot 1_B) = A \ot 1_B$. Assume that the
relative commutant of $A \ot 1_B$ in $A \ot B$ is equal to $1_A \ot B$ (this in particular implies that $A$ has trivial centre).
Then $\alpha$ is a product type automorphism.
\end{lemma}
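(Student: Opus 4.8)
The plan is to produce automorphisms $\beta\in\Aut(A)$ and $\gamma\in\Aut(B)$ with $\alpha=\beta\otimes\gamma$; this is what it means for $\alpha$ to be of product type. Since by assumption $\alpha(A\otimes 1_B)=A\otimes 1_B$ (an equality, so the restriction is onto), I would first set $\beta:=\alpha|_{A\otimes 1_B}$ and regard it, via the canonical identification $A\cong A\otimes 1_B$, as an automorphism of $A$.

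To obtain $\gamma$ I would exploit the fact that a $*$-automorphism intertwines relative commutants: for $x\in A\otimes B$ one has $x\in(A\otimes 1_B)'\cap(A\otimes B)$ if and only if $\alpha(x)\in(\alpha(A\otimes 1_B))'\cap(A\otimes B)=(A\otimes 1_B)'\cap(A\otimes B)$, the last equality because $\alpha(A\otimes 1_B)=A\otimes 1_B$. Hence $\alpha$ maps $(A\otimes 1_B)'\cap(A\otimes B)$ onto itself, and by hypothesis this relative commutant is exactly $1_A\otimes B$; therefore $\alpha(1_A\otimes B)=1_A\otimes B$, and I set $\gamma:=\alpha|_{1_A\otimes B}\in\Aut(B)$.

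It then remains to check that $\alpha=\beta\otimes\gamma$. Writing an elementary tensor as $a\otimes b=(a\otimes 1_B)(1_A\otimes b)$ and using multiplicativity of $\alpha$ gives
\[
\alpha(a\otimes b)=\alpha(a\otimes 1_B)\,\alpha(1_A\otimes b)=(\beta(a)\otimes 1_B)(1_A\otimes\gamma(b))=\beta(a)\otimes\gamma(b)
\]
for all $a\in A$ and $b\in B$. Since $\alpha$ is isometric, hence continuous, and the algebraic tensor product is dense in $A\otimes B$, this identity determines $\alpha$ and yields $\alpha=\beta\otimes\gamma$ on all of $A\otimes B$; in particular $\beta\otimes\gamma$ is automatically a genuine automorphism of the spatial tensor product, since it coincides with $\alpha$, so no separate well-definedness check is needed. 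The argument is essentially formal and there is no real obstacle; the one input that does all the work is the standing hypothesis that the relative commutant of $A\otimes 1_B$ equals $1_A\otimes B$ rather than something larger --- it is precisely this that forces $\alpha$ to respect the second tensor factor, and it is what would fail if, say, $A$ had non-trivial centre.
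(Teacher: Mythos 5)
Your proof is correct and follows essentially the same route as the paper: both arguments use the invariance of $A\otimes 1_B$ together with the relative commutant hypothesis to deduce that $\alpha(1_A\otimes B)=1_A\otimes B$ (your "automorphisms intertwine relative commutants" is exactly the paper's substitution of $a\otimes 1_B$ by $\alpha(a'\otimes 1_B)$). You additionally spell out the final multiplicativity-and-density step identifying $\alpha$ with $\beta\otimes\gamma$, which the paper leaves implicit.
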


\begin{proof}
It suffices to prove that $\alpha(1_A \ot B) \subset 1_A \ot B$ (the assumptions above hold also for $\alpha^{-1}$). By the
relative commutant condition we just need to show that if $a \in A$ and $b \in B$ then $\alpha(1_A \ot b) (a \ot 1_B) = (a \ot
1_B)\alpha(1_A \ot b)$. Substituting $a \ot 1_B$ by $\alpha(a' \ot 1_B)$ shows the last equality holds.
\end{proof}

Let $A$ be a UHF algebra. Then $A$ admits a filtration $(A_n)_{n=0}^{\infty}$ given by a sequence of tensor products of full matrix algebras of sizes
$k_i$, where $k_i > 1$ for $i \in \N$, so that $A_n$ is equal to $M_{\prod_{i=1}^n k_i}$. We call any such filtration a UHF-filtration.
The following fact is an immediate consequence of the last lemma.

\begin{proposition} \label{productaction}
Let $A$ be a UHF algebra with a UHF-filtration $(M_{\prod_{i=1}^n k_i})_{n=0}^{\infty}$.
Every automorphism of $A$ fixing this filtration  is of the form $\lim_{n \to \infty} \textup{Ad}(U_1 \ot
\cdots \ot U_n \ot 1 \ot \cdots)$ for a sequence of unitaries $(U_n)_{n=1}^{\infty}$ with $U_n \in M_{k_n}$. Conversely, each
such sequence of unitaries determines an automorphism of $A$ fixing the prescribed filtration.
\end{proposition}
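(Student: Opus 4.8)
The plan is to induct on $n$ to show that the restriction $\theta_n := \theta|_{A_n}$ has the form $\Ad(U_1 \ot \cdots \ot U_n)$ for unitaries $U_i \in M_{k_i}$, and then to pass to the inductive limit. Since $\theta$ fixes the filtration, each $\theta_n$ is a genuine automorphism of the full matrix algebra $A_n = M_{\prod_{i=1}^n k_i}$. For $n=1$ I would simply invoke the classical fact that automorphisms of full matrix algebras are inner, so $\theta_1 = \Ad(U_1)$ with $U_1 \in M_{k_1}$ unitary.

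For the inductive step I would identify $A_n$ with $A_{n-1} \ot M_{k_n}$. The relative commutant of $A_{n-1} \ot 1$ inside $A_n$ is $1 \ot M_{k_n}$ by the standard computation of the commutant of a matrix subalgebra, and $\theta_n(A_{n-1}\ot 1) = \theta(A_{n-1}) = A_{n-1}\ot 1$ because $\theta$ fixes $A_{n-1}$; hence the preceding lemma applies to $\theta_n \in \Aut(A_{n-1}\ot M_{k_n})$ and gives $\theta_n(1 \ot M_{k_n}) \subseteq 1 \ot M_{k_n}$, with equality after also applying it to $\theta_n^{-1}$. So $\theta_n$ preserves each of the two commuting subalgebras $A_{n-1}\ot 1$ and $1 \ot M_{k_n}$ which together generate $A_n$, and therefore splits as $\theta_n = \theta_{n-1} \ot \beta_n$, where $\beta_n \in \Aut(M_{k_n})$ is determined by $\theta_n(1 \ot b) = 1 \ot \beta_n(b)$. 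Writing $\beta_n = \Ad(U_n)$ with $U_n \in M_{k_n}$ and combining with the inductive hypothesis for $\theta_{n-1}$ yields $\theta_n = \Ad(U_1 \ot \cdots \ot U_n)$.

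To conclude the forward implication I would observe that $\Ad(U_1 \ot \cdots \ot U_n \ot 1 \ot \cdots)$, viewed as an automorphism of $A$, restricts to $\theta_n$ on $A_n$; hence $\theta$ and $\Ad(U_1 \ot \cdots \ot U_n \ot 1 \ot \cdots)$ agree on the dense subalgebra $\bigcup_m A_m$ once $n$ is large enough (depending on the chosen element), so $\theta = \lim_{n\to\infty}\Ad(U_1 \ot \cdots \ot U_n \ot 1 \ot \cdots)$ in the point-norm topology. For the converse, given any unitaries $U_n \in M_{k_n}$, I would note that the automorphisms $\Ad(U_1 \ot \cdots \ot U_n)$ of the $A_n$ are mutually compatible (restricting $\Ad(U_1\ot\cdots\ot U_{n+1})$ to $A_n \ot 1$ recovers $\Ad(U_1\ot\cdots\ot U_n)$), each preserves $A_n$, and each is isometric, so they assemble to an isometric $*$-endomorphism of $\bigcup_n A_n$ which extends to $A$; the analogous construction with the $U_n^*$ provides the inverse, giving an automorphism of $A$ that fixes the filtration.

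I expect the only real content to be the inductive step, and there the work is essentially discharged by the preceding lemma; the residual obstacle is bookkeeping --- choosing the identifications $A_n \cong A_{n-1}\ot M_{k_n}$ coherently and verifying that an automorphism preserving two commuting generating subalgebras equals the spatial tensor product of its restrictions. None of this is deep, which is why the statement was flagged as an immediate consequence of the lemma.
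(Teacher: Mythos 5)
Your proof is correct and follows exactly the route the paper intends: the paper dismisses the proposition as an immediate consequence of the preceding relative-commutant lemma, and your induction (applying the lemma to $A_n\cong A_{n-1}\otimes M_{k_n}$ to split off $\Ad(U_n)$ at each stage, then passing to the point-norm limit on the dense union) is precisely the fleshed-out version of that assertion, with the converse direction handled in the standard way.
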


Product type automorphisms of a UHF algebra have been studied by Lance, Bratteli and others
(c.f.\:\cite{Brat}). In particular the following criterion for innerness of a product automorphism is well known.

\begin{proposition}[Theorem 6.3 of \cite{book}] \label{outer}
A product automorphism is inner if and only if the sequence of unitaries $(U_1 \ot \cdots \ot U_n \ot 1 \ot
\cdots)_{n=1}^{\infty}$ is (norm) convergent in $A$.
\end{proposition}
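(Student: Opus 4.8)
My plan is to treat the two implications separately. By Proposition~\ref{productaction} the product automorphism in question has the form $\theta=\lim_{n}\Ad(V_{n})$ in the point-norm topology, where $V_{n}=U_{1}\ot\cdots\ot U_{n}\ot 1\ot\cdots\in U(A)$ with $U_{j}\in U(M_{k_{j}})$. The implication ``convergent $\Rightarrow$ inner'' is immediate: if $V_{n}\to V$ in norm then $V$ is unitary and $\|\Ad(V_{n})(x)-\Ad(V)(x)\|\le 2\|V_{n}-V\|\,\|x\|$ for every $x\in A$, so $\Ad(V_{n})\to\Ad(V)$ point-normwise and $\theta=\Ad(V)$ is inner; this uses only that $(V_{n})$ converges modulo scalars, since $\Ad$ annihilates $\mathbb{T}1$.

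For the converse, suppose $\theta=\Ad(W)$ with $W\in U(A)$, and exploit the UHF-filtration. Write $A=A_{n}\ot A^{(n)}$ with $A_{n}\cong M_{k_{1}\cdots k_{n}}$ and $A^{(n)}$ the tail UHF algebra, let $\tau$ be the unique tracial state on $A$, and let $E_{n}=\mathrm{id}_{A_{n}}\ot(\tau|_{A^{(n)}})\colon A\to A_{n}$ be the trace-preserving conditional expectation; the only property of the $E_{n}$ used is that $E_{n}(x)\to x$ in norm for each $x\in A$, which holds because the $E_{n}$ are contractive, fix each $A_{m}$ with $m\le n$, and $\bigcup_{m}A_{m}$ is dense. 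First, for $x\in A_{n}$ and $m\ge n$ the extra tensor factors $U_{n+1},\dots,U_{m}$ of $V_{m}$ sit in slots disjoint from the support of $x$, so $\Ad(V_{m})(x)=\Ad(V_{n})(x)$; hence $\theta|_{A_{n}}=\Ad(V_{n})|_{A_{n}}$. Comparing this with $\theta=\Ad(W)$ shows that $V_{n}^{*}W$ commutes with $A_{n}$, so by the standard relative-commutant fact that $M\ot 1$ has relative commutant $1\ot B$ in $M\ot B$ for a full matrix algebra $M$ (as used in the Lemma preceding Proposition~\ref{productaction}) we obtain $V_{n}^{*}W=1_{A_{n}}\ot C_{n}$ for some unitary $C_{n}\in A^{(n)}$, i.e.\ $W=(U_{1}\ot\cdots\ot U_{n})\ot C_{n}$. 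Applying $E_{n}$ to this identity gives $E_{n}(W)=\tau(C_{n})\,V_{n}$, and since $E_{n}(W)\to W$ we get $\|\tau(C_{n})V_{n}-W\|\to 0$, whence $|\tau(C_{n})|=\|\tau(C_{n})V_{n}\|\to\|W\|=1$. Putting $\lambda_{n}=\tau(C_{n})/|\tau(C_{n})|\in\mathbb{T}$ for $n$ large then yields $\|\lambda_{n}V_{n}-W\|\to 0$: the sequence $(V_{n})$ converges in norm modulo scalars, equivalently after a re-phasing of the representatives $U_{n}$.

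The step I expect to require the most care is precisely this phase bookkeeping, which is also where the statement has to be read carefully. As the example $U_{1}=1_{M_{2}}$, $U_{j}=-1_{M_{2}}$ for $j\ge 2$ shows, one can have $\theta=\mathrm{id}$ (hence inner) while $V_{n}=(-1)^{n-1}1_{A}$ does not converge, so ``convergent'' in the proposition must be understood modulo $\mathbb{T}1$, i.e.\ as norm-convergence of a suitably re-phased representing sequence; no normalisation of the individual $U_{n}$ removes this scalar ambiguity. The other ingredients --- the identity $\theta|_{A_{n}}=\Ad(V_{n})|_{A_{n}}$ coming from the product structure, the elementary relative-commutant computation, and the norm-convergence $E_{n}\to\mathrm{id}$ --- are all routine.
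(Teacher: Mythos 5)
The paper gives no proof of this proposition --- it is quoted directly from Theorem 6.3 of \cite{book} --- so there is no internal argument to compare against; your write-up has to be judged on its own, and it is correct. The backward implication is the elementary estimate $\|\Ad(V_n)(x)-\Ad(V)(x)\|\leq 2\|V_n-V\|\,\|x\|$. For the forward implication, the identity $\theta|_{A_n}=\Ad(V_n)|_{A_n}$ plus the relative commutant computation $ (A_n\ot 1)'\cap(A_n\ot A^{(n)})=1\ot A^{(n)}$ gives $W=V_n(1\ot C_n)$, and the slice maps $E_n=\mathrm{id}\ot\tau|_{A^{(n)}}$ (unital completely positive, hence contractive, and converging pointwise to the identity) then yield $\|\tau(C_n)V_n-W\|\to 0$ and $|\tau(C_n)|\to 1$; all of these steps check out. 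Your remark about phases is the substantive point and is well taken: as literally stated the ``only if'' direction fails, since with $U_1=1_{M_2}$ and $U_j=-1_{M_2}$ for $j\geq 2$ one gets $\theta=\mathrm{id}$ (inner) while $V_n=(-1)^{n-1}1_A$ is not Cauchy. What your argument actually proves --- and what the criterion in the literature asserts --- is that $\theta$ is inner if and only if there exist scalars $\lambda_n\in\mathbb{T}$ such that $(\lambda_n V_n)_{n=1}^\infty$ converges in norm, equivalently the representing unitaries $U_n$ can be rescaled by phases (which does not change $\theta$, by Proposition \ref{productaction}) so that the resulting partial tensor products converge. This corrected reading is the one the paper needs, and the way Proposition \ref{outer} is used later (to arrange outerness of all nonzero powers of a product automorphism) is unaffected.
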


Combining these results and the construction in \cite{CI} we obtain the following result.

\begin{proposition}\label{outersimple}
Let $A$ be a UHF algebra and suppose $\alpha \in \Aut(A)$ fixes a UHF-filtration $(A_n)$. Then
$A \semidir_{\alpha} \Z$ allows spectral triples which are $p$-summable for $p \in (1,3)$ or $p=5$ and which also induce Lip-metrics
on the state space of $A \semidir_{\alpha} \Z$.
\end{proposition}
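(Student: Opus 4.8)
The plan is to obtain the spectral triple on $A\rtimes_\alpha\Z$ by feeding a Christensen--Ivan triple on $A$ into Theorems \ref{constr} and \ref{Lip-metric}, using the unique trace of $A$ to supply the equicontinuity that the crossed-product machinery requires. The hypothesis gives $\alpha(A_n)=A_n$ for all $n$ (so by Proposition \ref{productaction} the automorphism $\alpha$ is of product type, although we shall not need this explicitly), and since $A$ is UHF it carries a \emph{unique} tracial state $\tau$, which is therefore $\alpha$-invariant and faithful.

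First I would construct the coefficient triple. Applying \cite[Theorem 2.1]{CI} to the filtration $(A_n)$ and the faithful state $\tau$, with the eigenvalue sequence $(\lambda_m)$ chosen to grow sufficiently fast, one obtains, for any prescribed $p_0>0$, a $p_0$-summable, non-degenerate odd spectral triple $(\bigcup_n A_n,\pi,D_A)$ on $A$ satisfying the Lipschitz condition; here $\pi$ is the GNS representation of $(A,\tau)$ on $\Hil_A=L^2(A,\tau)$ with cyclic vector $\xi$, $D_A=\sum_m\lambda_m Q_m$, and $Q_m$ is the projection onto $E_m=\pi(A_m)\xi\ominus\pi(A_{m-1})\xi$. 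Because $\tau\circ\alpha=\tau$, the automorphism $\alpha$ is spatially implemented by a unitary $U$ on $\Hil_A$ with $\pi(\alpha(\cdot))=U\pi(\cdot)U^*$; and because $\alpha(A_m)=A_m$, this $U$ preserves each subspace $\pi(A_m)\xi$, hence commutes with every $Q_m$ and so with $D_A$. Consequently
$$
\|[D_A,\pi(\alpha^r(x))]\|=\|U^r[D_A,\pi(x)]U^{-r}\|=\|[D_A,\pi(x)]\|
$$
for all $x\in\bigcup_n A_n$ and $r\in\Z$, so the $\Z$-action generated by $\alpha$ is isometric --- in particular metrically equicontinuous --- on the spectral quantum metric space $(A,L_{D_A})$ in the sense of Definition \ref{equicont}.

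Next I would run the crossed-product construction. Taking $\Gamma=\Z$ with the identity function $\iota$ as a proper translation bounded function, the triple $(\mathbb C\Z,\ell^2(\Z),M_\iota)$ has the integers as eigenvalues and is therefore $q$-summable for every $q>1$. Theorem \ref{constr} then yields an even spectral triple on $A\rtimes_\alpha\Z$ with dense domain $C_c(\Z,\bigcup_n A_n)$, which is non-degenerate and $(p_0+q)$-summable; and, since $D_A$ satisfies the Lipschitz condition and the action is metrically equicontinuous, Theorem \ref{Lip-metric} shows that this triple induces a Lip-metric on the state space of $A\rtimes_\alpha\Z$. As $p_0>0$ and $q>1$ are arbitrary, the summability exponent $p_0+q$ can be made to equal any value exceeding $1$, and in particular every $p\in(1,3)$ and $p=5$, which is the assertion.

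The routine parts here are the summability bookkeeping and the verification that Theorems \ref{constr} and \ref{Lip-metric} apply. The one point requiring care is the second step: one must know that the Christensen--Ivan triple can be chosen to be simultaneously $p_0$-summable \emph{and} Lipschitz --- which is exactly the content of \cite[Theorem 2.1]{CI} --- and that $\alpha$ acts isometrically for it, the latter being forced by the two structural facts that $\tau$ is $\alpha$-invariant and that $\alpha$ fixes the filtration. I therefore do not anticipate a genuine obstacle: the argument is essentially the assembly of \cite[Theorem 2.1]{CI} with Theorems \ref{constr} and \ref{Lip-metric}.
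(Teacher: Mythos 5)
Your proposal is correct and follows essentially the same route as the paper: build a Christensen--Ivan triple from the fixed filtration in the GNS representation of the (unique, hence $\alpha$-invariant) trace, observe that since $\alpha$ fixes the filtration the implementing unitary commutes with each $Q_m$ and hence with $D_A$, so the action is isometric, and then feed this into Theorems \ref{constr} and \ref{Lip-metric}. The only cosmetic difference is that the paper quotes the UHF-specific \cite[Theorem 3.1]{CI}, whose summability range $p\in(0,2)$ or $p=4$ is precisely what produces the exponents $(1,3)$ and $5$ in the statement, whereas you invoke \cite[Theorem 2.1]{CI} to get an arbitrarily small summability exponent for the coefficient triple; both choices assemble the same ingredients and yield the claim.
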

\begin{proof}
For $p \in (0,2)$ or $p=4$ there is a $p$-summable Christensen-Ivan spectral triple $(\Alg,\pi ,D)$  on $A$ with  $\Alg = \bigcup_{n \in \N} A_n$, inducing a Lip-metric (\cite{CI} Theorem 3.1). These triples act in the GNS-representation $\pi$ of the trace on $A$. Since the trace
is invariant under $\alpha$ it follows, as remarked before, that $\alpha$ is isometric with respect to $D$, so that Theorems \ref{Lip-metric} and \ref{constr} apply to produce the desired triple.
\end{proof}

By a result of Kishimoto (\cite{Kish}) we know that if $A$ is a simple $C^*$-algebra and $\alpha \in \Aut(A)$
an automorphism such that $\alpha^m$ is outer for each $m \in \mathbb{Z} \setminus\{0\}$, then $A \semidir_{\alpha} \Z$
is simple. It is possible to choose a sequence of unitaries $(U_i)_{i=1}^{\infty}$ in such a way that the resulting product-type automorphism $\alpha$ satisfies the above condition.

The remarks preceding Proposition \ref{indlimit} imply that the crossed product $A \semidir_{\alpha} \Z$ is an inductive limit of the algebras
of the form $M_{m_1\cdots m_n}(C(\mathbb{T}))$. In fact Bratteli in \cite{Brat} describes these inductive limits explicitly, with
the connecting maps given in terms of the spectra of the unitaries defining the product action. Note that though the building
blocks here are identical to these in the Bunce-Deddens algebra case, in fact the identifications $C(\Z_{m_1,\ldots, m_n})
\rtimes_{T_n} \Z$ and $M_{m_1\ldots m_n} \rtimes_{\alpha_{U_1\cdots U_n}} \Z$ with $M_{m_1 \cdots m_n} \ot C(\T)$ are quite
different, and the resulting spectral triples on the limit A$\T$-algebras have a very different character.

In the light of the classification of minimal equicontinuous actions on the Cantor set, the following question is natural.
\begin{question}
Let $A$ be a UHF-algebra and $\alpha$ an automorphism of $A$ such that $\alpha^m$ is outer for each $m\neq 0$, so that $A\rtimes_\alpha\Z$ is simple by \cite{Kish}. If in addition the induced action on the state space $S(A)$ is equicontinuous, must $\alpha$ be conjugate to a product type automorphism?
\end{question}

\subsection{Generalized Bunce-Deddens algebras} In this final subsection we look at more general groups $\Gamma$ acting equicontinuously on the Cantor set. Provided $\Gamma$ is equipped with a length function, we obtain spectral triples on the resulting crossed product, but in general we do not know whether these triples satisfy the Lipschitz condition.

Let $\Gamma$ be a countable discrete  group and let $(\Gamma_n)_{n=1}^\infty$ be a strictly decreasing sequence of finite index subgroups of $\Gamma$.  For each $n$, write $X_n$ for the left coset space $\Gamma/\Gamma_n$ so we have an action $\Gamma\curvearrowright X_n$ of $\Gamma$ on the finite set $X_n$ given by left multiplication.  The inclusions $\Gamma_n\supset \Gamma_{n+1}$ induce $\Gamma$-equivariant maps $X_n\leftarrow X_{n+1}$.  Let $X$ be the projective limit in the category of $\Gamma$-spaces of
$$
X_1\leftarrow X_2\leftarrow\cdots\leftarrow X_n\leftarrow\cdots.
$$
Write $\iota_n$ for the induced $\Gamma$-equivariant map $X\rightarrow X_n$. By equipping $X$ with the compatible metric $d(x,y)=1/n$, where $n\in \N$ is maximal such that $\iota_n(x)=\iota_n(y)$ (with $d(x,y)=0$ when $x=y$) we see that the action of $\Gamma$ on $X$ is equicontinuous.  Since $\Gamma$ is transitive on each $X_n$, each orbit of $\Gamma$ is dense in $X$, i.e. the action is minimal. If the sequence $(\Gamma_n)_{n\in \N}$ witnesses the residual finiteness of $\Gamma$ (i.e. $\bigcap_{n\in \N}\Gamma_n=\{e\}$), then the action is free.  Terminology for these actions varies: F.~Krieger refers to them as $\Gamma$-odometers in \cite{Kri}, while \cite{CP} refers to them as $\Gamma$\emph{-subodometers}, reserving the terminology $\Gamma$-odometers for the case when all the subgroups $\Gamma_n$ are normal, so that $X$ carries a group structure as a profinite completion of $\Gamma$. We will adopt the latter convention.

It is undoubtably well known to experts that every minimal equicontinuous action of a countable discrete group arises in this way. We have been unable to find this statement in the literature, so we include a proof, which is a simple modification of K\r{u}rka's account of the classical result, sometimes attributed to Ellis, that every minimal equicontinuous action of $\mathbb Z$ on a Cantor set is conjugate to an odometer action \cite[Theorem 4.4]{Kur}.
\begin{proposition} \label{Ellis}
Let $\alpha:\Gamma\curvearrowright X$ be a minimal equicontinuous action of a countable discrete group $\Gamma$ on the Cantor set $X$. Then $\alpha$ is conjugate to a $\Gamma$-subodometer. If the action $\alpha$ is free, then $\Gamma$ is necessarily residually finite.
\end{proposition}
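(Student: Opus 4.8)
The plan is to avoid building nested clopen partitions by hand (as K\r{u}rka does for $\Gamma=\Z$) and instead pass to the \emph{enveloping (Ellis) group} of the action, whose profinite structure encodes the desired tower of finite-index subgroups. First I would use equicontinuity to fix a compatible $\Gamma$-invariant metric $\bar d$ on $X$. The family $\{\alpha_\gamma:\gamma\in\Gamma\}$ is then an equicontinuous family of self-maps of the compact metric space $(X,\bar d)$, so by Arzel\`a--Ascoli its closure $G$ in $\mathrm{Homeo}(X)$, taken with the topology of uniform convergence, is compact; it is second countable because $\mathrm{Homeo}(X)$ is a Polish group. A uniform limit of $\bar d$-isometries of $X$ is again a $\bar d$-isometry, hence surjective since $X$ is compact, so $G$ consists of isometries of $(X,\bar d)$; being the closure of a subgroup of the topological group $\mathrm{Homeo}(X)$ it is itself a compact second countable group, with $\{\alpha_\gamma\}$ dense in it.

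Next I would realise $X$ as a homogeneous space of $G$. Fixing a base point $x_0\in X$, the orbit $Gx_0$ is compact, hence closed, and contains the dense orbit $\Gamma x_0$ by minimality, so $Gx_0=X$ and the action of $G$ on $X$ is transitive. With $H:=\mathrm{Stab}_G(x_0)$ a closed subgroup, the orbit map induces a $\Gamma$-equivariant homeomorphism $G/H\xrightarrow{\cong}X$, where $\Gamma$ acts on $G/H$ through the inclusion $\Gamma\hookrightarrow G$. Since $G$ acts faithfully on the totally disconnected space $X$, the identity component $G^0$ lies in every conjugate of $H$, hence in the kernel of the action, so $G^0=\{e\}$ and $G$ is totally disconnected. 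By van Dantzig's theorem a compact second countable totally disconnected group is profinite: there is a decreasing sequence $(N_k)_k$ of open normal subgroups with $\bigcap_k N_k=\{e\}$ forming a neighbourhood basis at $e$.

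For each $k$ the set $G/(HN_k)$ is finite and $G$ — hence its dense subgroup $\Gamma$ — acts on it transitively; writing $\Gamma_k:=\Gamma\cap HN_k$ for the stabiliser in $\Gamma$ of the base coset, this is a finite-index subgroup and $G/(HN_k)\cong\Gamma/\Gamma_k$ as $\Gamma$-sets. The natural surjections $G/(HN_{k+1})\to G/(HN_k)$ are $\Gamma$-equivariant, and since $H$ is closed and $(N_k)$ is a neighbourhood basis we get $\bigcap_k HN_k=H$, so $\varprojlim_k G/(HN_k)\cong G/H\cong X$ as $\Gamma$-spaces. Passing to a subsequence of the indices $k$ — legitimate since $X$ is infinite, forcing $|G/(HN_k)|\to\infty$ — makes $(\Gamma_k)_k$ strictly decreasing, so $\alpha$ is conjugate to the $\Gamma$-subodometer attached to $(\Gamma_k)_k$. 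Finally $\bigcap_k\Gamma_k=\Gamma\cap\bigcap_k HN_k=\Gamma\cap H=\mathrm{Stab}_\Gamma(x_0)$, which is trivial when $\alpha$ is free; in that case $(\Gamma_k)_k$ witnesses the residual finiteness of $\Gamma$.

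The hard part will be the first step together with the profiniteness in the second: one must check with care that the uniform closure of $\{\alpha_\gamma\}$ is genuinely a group of \emph{surjective} isometries (i.e.\ that no collapsing occurs in the limit) and then invoke van Dantzig to organise $G$ as an inverse limit of finite groups. A more hands-on alternative, closer to K\r{u}rka's account, bypasses the enveloping group and constructs directly a nested sequence of $\Gamma$-invariant clopen partitions of $X$ with mesh tending to zero on which $\Gamma$ acts transitively, identifying the atoms of the $n$-th partition $\Gamma$-equivariantly with $\Gamma/\Gamma_n$; there the delicate point is identical, namely that equicontinuity — again via Arzel\`a--Ascoli precompactness of $\{\alpha_\gamma\}$ in the uniform topology — forces each such $\Gamma$-invariant partition to be \emph{finite}.
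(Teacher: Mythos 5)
Your argument is correct, but it proceeds by a genuinely different route from the paper. The paper follows K\r{u}rka's elementary scheme directly: fixing an invariant compatible metric $d$, it introduces for each $\eps>0$ the chain relation $x\approx_\eps y$ (existence of a finite $\eps$-chain from $x$ to $y$), observes that its classes form a finite $\Gamma$-invariant clopen partition on which $\Gamma$ acts transitively by minimality, and takes $\Gamma_n$ to be the stabiliser of a nested sequence of distinguished classes; the tower of partitions with mesh tending to zero is a basis for the topology, which yields the conjugacy. This neatly sidesteps the issue you flag as delicate — finiteness of the invariant partitions is immediate from compactness because the chain classes are open — whereas your route concentrates the work in the Ellis group: Arzel\`a--Ascoli compactness of the closure $G$ of $\{\alpha_\gamma\}$, surjectivity of limit isometries (standard for compact metric spaces, so no collapsing occurs), total disconnectedness of $G$ from faithfulness on a totally disconnected space, and van Dantzig/profiniteness to produce the $N_k$ and hence $\Gamma_k=\Gamma\cap HN_k$. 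Your identifications $\bigcap_k HN_k=H$, $\varprojlim G/(HN_k)\cong G/H\cong X$, the passage to a subsequence to make $(\Gamma_k)$ strictly decreasing, and $\bigcap_k\Gamma_k=\Gamma\cap H=\mathrm{Stab}_\Gamma(x_0)$ for the freeness/residual finiteness claim are all sound. What your approach buys is conceptual clarity and extra structure — $X$ is exhibited outright as a homogeneous space of a profinite group, recovering the classical ``rotation on a compact group'' picture — at the cost of more machinery; the paper's approach is shorter, self-contained, and visibly a minimal modification of the $\Z$ case.
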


\begin{proof}
Let $d$ be a compatible metric on $X$ invariant under the action of $\Gamma$.
For $\eps>0$, define an equivalence relation $\approx_\eps$ on $X$ by $x\approx_\eps y$ if and only if there is a finite sequence $x_0,\cdots,x_n$ in $X$ with $x_0=x$, $x_n=y$ and $d(x_i,x_{i+1})<\eps$ for $i=0,\cdots,n-1$. The equivalence classes of $\approx_\eps$ form a finite partition of $X$ into clopen subsets.

There exists $\eps_1>0$ such that $\approx_{\eps_1}$ has at least two equivalence classes. Let $\mathcal V_1$ be the set of these equivalence classes. By construction $\Gamma$ permutes these equivalence classes and the action of $\Gamma$ on $\mathcal V_1$ is transitive as the original action on $X$ was minimal.  Fix a distinguished element $V_1\in\mathcal V_1$ and let $\Gamma_1$ denote the stabiliser of $V_1$ under the permutation action so that $[\Gamma:\Gamma_1]=|\mathcal V_1|<\infty$. In this way the actions of $\Gamma$ on $\Gamma/\Gamma_1$ and $\mathcal V_1$ are conjugate.

Now find $\eps_2<\eps_1/2$ small enough so that each member of $\mathcal V_1$ is subdivided into at least two elements of the  equivalence classes $\mathcal V_2$ of $\approx_{\eps_2}$. Fix a distinguished class $V_2\in\mathcal V_2$ with $V_2\subset V_1$ and let $\Gamma_2$ denote the stabiliser of $V_2$ under the permutation action of $\Gamma$ on $\mathcal V_2$. In this way $\Gamma_2$ is a finite index subgroup of $\Gamma$ which is contained in $\Gamma_1$ and the actions of $\Gamma$ on $\Gamma/\Gamma_2$ and $\mathcal V_2$ are conjugate.

Continuing in this fashion we obtain sequences  $(\eps_n)_{n=1}^\infty$, $(\Gamma_n)_{n=1}^\infty$, $(\mathcal V_n)_{n=1}^\infty$ and $(V_n)_{n=1}^\infty$.  By construction, the  system $\bigcup_{n \in \N}\mathcal V_n$ provides a basis for the topology on $X$ and so the action $\alpha$ of $\Gamma$ on $X$ is conjugate to the $\Gamma$-subodometer action we have constructed. Note that the unique point in $\bigcap_{n\in \N}V_n$ has the stabiliser equal to $\bigcap_{n\in \N}\Gamma_n$. Thus if the subodometer action (equivalently, $\alpha$) is free, then $\bigcap_{n\in \N}\Gamma_n=\{e\}$, so that $\Gamma$ is residually finite. This completes the proof.
\end{proof}

The $C^*$-algebras arising as crossed products by free $\Gamma$-odometers for amenable residually finite $\Gamma$ are simple and nuclear and have recently been studied by Orfanos \cite{Orf} and Carri\'on \cite{Car}. In particular Carri\'on has classified such crossed products by their $K$-theory under the assumption that $\Gamma$ is a central extension of a finitely generated abelian group by another finitely generated abelian group. The group $\mathbb Z^d$ belongs to this class, and by the iteration procedure from Theorem \ref{Zd} we obtain spectral triples which satisfy the Lipschitz condition on the crossed products arising from $\mathbb Z^d$-odometers.

\vspace*{0.2cm} \noindent \textbf{Acknowledgment.}  AS and SW would like to thank Thierry Giordano for his helpful comments on equicontinuous actions. We would like to thank the referee for helpful comments and suggestions for further research directions.

\def\ocirc#1{\ifmmode\setbox0=\hbox{$#1$}\dimen0=\ht0 \advance\dimen0
  by1pt\rlap{\hbox to\wd0{\hss\raise\dimen0
  \hbox{\hskip.2em$\scriptscriptstyle\circ$}\hss}}#1\else {\accent"17 #1}\fi}
\providecommand{\bysame}{\leavevmode\hbox to3em{\hrulefill}\thinspace}

\end{document}